\patchcmd{\section}{\scshape}{\bfseries}{}{}
\renewcommand{\@secnumfont}{\bfseries}
\newcommand{\R}{\mathbb{R}}
\newcommand{\E}{\mathbb{E}}
\newcommand{\1}{\mathbb{1}}
\renewcommand{\P}{\mathbb{P}}
\newcommand{\Z}{\mathbb{Z}}
\newcommand{\N}{\mathbb{N}}
\newcommand{\T}{\mathbb{T}}
\newcommand{\C}{\mathbb{C}}
\newcommand{\A}{\mathcal{A}}
\newcommand{\M}{\mathcal{M}}
\newcommand{\pri}{\mathcal{P}}
\newcommand{\Q}{\mathcal{Q}}
\newcommand{\B}{\mathcal{B}}
\newcommand{\F}{\mathcal{F}}
\newcommand{\setdef}{\stackrel {\rm {def}}{=}}
\newcommand{\ds}{\displaystyle}
\def\1{\,\rlap{\mbox{\small\rm 1}}\kern.15em 1}
\def\build#1_#2^#3{\mathrel{\mathop{\kern 0pt#1}\limits_{#2}^{#3}}}
\def\tend#1#2{\build\hbox to 12mm{\rightarrowfill}_{#1\rightarrow #2}^{ }}
\def\tendn{\tend{n}{\infty}}
\def\converge#1#2#3#4{\build\hbox to
#1mm{\rightarrowfill}_{#2\rightarrow #3}^{\hbox{\scriptsize #4}}}
\def\Rea#1{{\rm {Re}}({#1})}
\def\rand{\rm {rand}}
\newcommand{\beq}{\begin{equation}}
\newcommand{\eeq}{\end{equation}}
\numberwithin{equation}{section}
\theoremstyle{plain}
\newtheorem{thm}{Theorem}[section]
\newtheorem{lemm}[thm]{Lemma}
\newtheorem{defn}[thm]{Definition}
\newtheorem{conj}[thm]{Conjecture}
\newtheorem{Prop}[thm]{Proposition}
\newtheorem{rem}[thm]{Remark}
\newtheorem{Cor}[thm]{Corollary}
\newtheorem*{sarnak}{Sarnak's conjecture}
\begin{document}

\title[Spectral properties of the M\"{o}bius function]{Spectral properties of the M\"{o}bius function and a random M\"{o}bius model.}

\author{E. H. el Abdalaoui }

\author {M. Disertori }

\address{Department of Mathematics, University\\
of Rouen, LMRS, UMR 60 85, Avenue de l'Universit\'e, BP.12, 76801\\
Saint Etienne du Rouvray - France}
\email{elhoucein.elabdalaoui@univ-rouen.fr }
\email{margherita.disertori@univ-rouen.fr}

\maketitle

{\renewcommand\abstractname{Abstract}}

\begin{abstract}
 Assuming Sarnak conjecture is true for any singular dynamical process, we prove that the spectral measure of the M\"{o}bius function is equivalent to Lebesgue measure. Conversely, under Elliott conjecture, we establish that the M\"{o}bius function is orthogonal to any uniquely ergodic dynamical system with singular spectrum. Furthermore, using Mirsky Theorem, we find a new simple proof of Cellarosi-Sinai Theorem on the orthogonality of the square of the M\"{o}bius function with respect to any
weakly mixing dynamical system. Finally, we establish Sarnak conjecture for a
particular random model.
\vspace{8cm}

\hspace{-0.7cm}{\em AMS Subject Classifications} (2000): 37A15, 37A25, 37A30.\\
{\em Key words and phrases:} {M\"{o}bius function, spectral measure, correlations functions,
singular spectrum, Lebesgue spectrum.}\\
\end{abstract}

\newpage

\section{Introduction}
\paragraph{}

The M\"{o}bius function is defined for the positive integers $n$ by
\begin{equation}\label{Mobius}
\mu(n)= \begin{cases}
 1 {\rm {~if~}} n=1; \\
(-1)^r  {\rm {~if~}} n
{\rm {~is~the~product~of~}} r {\rm {~distinct~primes}}; \\
0  {\rm {~if~not}}
\end{cases}
\end{equation}
It is of great importance in Number Theory because of its  connection with the
Riemann $\zeta$-function via the formulae
\[
\sum_{n=1}^{+\infty}\frac{\mu(n)}{n^s}=\frac{1}{\zeta(s)}, \qquad
\sum_{n=1}^{+\infty}\frac{|\mu(n)|}{n^s}=\frac{\zeta(s)}{\zeta(2s)}
~~{\rm {with}}~~~ \Rea{s}>1.
\]
Furthermore, the estimate
\[
\left|\ds \sum_{n=1}^{x}\mu(n)\right|=O\left(x^{\frac12+\varepsilon}\right)\qquad
{\rm as} \quad  x \longrightarrow +\infty,\quad \forall \varepsilon >0
\]
is equivalent to the Riemann Hypothesis (\cite[pp.315]{Titchmarsh}).\medskip

The main goal of this note is to investigate the problem of spectral disjointness
of the M\"{o}bius function with any dynamical sequence with zero topological entropy.
The question is  initiated by P. Sarnak in \cite{sarnak1}
\cite{sarnak2}.
In his three lectures notes  \cite{sarnak}, P. Sarnak makes the following conjecture.
\begin{sarnak}\label{conj-sarnak}
 The M\"obius function is orthogonal to any deterministic sequence $(a_n)_{n \in \N}$,
that is,
\begin{eqnarray}\label{sarnak-conj}
\frac{1}{N}\sum_{n=1}^{N} \mu(n)a_n \tendn 0.
\end{eqnarray}
\end{sarnak}
The sequence $(a_n)$ is deterministic if it is generated by
a deterministic topological dynamical system $(X,T)$, i.e. $X$ is a compact space,
$T$ a continuous map from $X$ onto $X$ with topological entropy equal
to zero and there exists a continuous function from X to the complex plan $\C$ and
a point $x \in X$ for which  $a_n=f(T^nx)$ for all $n$.
\medskip

The M\"{o}bius function is orthogonal to any constant fonction \cite{Hildebrand}. This is follows
 from Kronecker's Lemma combined with Landau's observation:
\[
\sum_{n \geq 1}\frac{\mu (n)}{n} =0.
\]
This last relation also implies that the orthogonality of the M\"{o}bius function to the function $1$
(i.e. $\frac{1}{x}\sum_{n \leq x}\mu(n)
\tend{x}{+\infty}0$) is \enquote{equivalent} to the Prime Number Theorem  (PNT for short),
which says  that the number $\pi(x)$ of primes below $x$ satisfies
\[
\lim_{x \longrightarrow \infty}  \frac{\pi(x)\log(x)}{x } =1.
\]

The orthogonality of the M\"obius function to any sequence arising from a rotation
dynamical system ($X$ is the circle $\T$ and $Tx=x+\alpha$, $\alpha \in \T$) follows
 from the following inequality (Davenport \cite{Da})
\[
\max_{\theta \in \T}\left|\displaystyle\sum_{k \leq x}\mu(k)e^{ik\theta}\right|
\leq \frac{x}{\log(x)^{\varepsilon}},
\qquad {\rm where}\  \varepsilon >0.
\]
It is an easy exercise to establish, from Davenport's estimate above combined with the
spectral theorem and the standard ergodic argument \cite{Dunford-Schawrtz},
that the M\"obius function is orthogonal to any sequence
$f(T^nx)$, for almost every point $x \in X$ with $f \in L^2(X)$
(we assume $X$  is equipped with a probability
measure in this case).

The case of the orthogonality of the M\"{o}bius function to any nilsequence
$X=G/\Gamma$, where $G$ is a nilpotent Lie group, $\Gamma$ is
a lattice in $G$ and $T_g(\Gamma x)= \Gamma x g.)$ is covered by Green-Tao Theorem
\cite{Green-Tao}.
Recently, Bourgain-Sarnak and Ziegler in \cite{Bourgain-sarnak-Ziegler}
extend the Green-Tao result and establish that the M\"{o}bius function is orthogonal to
the horocycle flow. Indeed, applying Bourgain-Sarnak-Ziegler criterion, on can get
a simple proof of Green-Tao Theorem \cite{Ziegler}.
Roughly speaking, Bourgain-Sarnak-Ziegler criterion implies Conjecture
\eqref{sarnak-conj} is true when $\mu(n)$ is replaced by any  multiplicative
function with module less than 1 provided that the
mutual powers of the two dynamical systems are disjoint in the sens of Furstenberg.
It is well known that this latter property holds in the case of the spectral
disjointness of the mutual powers.
Therefore, the spectral disjointness of different primes powers implies the
disjointness of different primes  which ensures by Bourgain-Sarnak-Ziegler criterion
that the Sarnak Conjecture holds.

Exploiting this fact, el Abdalaoui-Lemanczyk-de-la-Rue obtain in the very recent work
\cite{elabdal-lem-de-la-rue} a new proof of Bourgain Theorem \cite{Bo} saying
that the rank one maps with bounded parameters are orthogonal to  M\"{o}bius.
Precisely, the authors extend Bourgain Theorem to a large class of rank one maps.

We should point out here that the  spectral disjointness of the mutual powers doesn't
holds in the case of Lebesgue spectrum.
\vspace{0.2cm}

There is a conjecture due to Elliott \cite{Elliott-C} saying that the spectral measure
of $\mu$ is exactly the Lebesgue measure up to a constant. As far as we know,
this conjecture is still open. Here we are able to prove that, under Sarnak's conjecture
 (plus one technical assumption)
the spectral measure of M\"obius function is equivalent
to the Lebesgue measure.

Moreover we give a new simple proof of Sarnak and Cellarosi-Sinai result
and assuming Elliott conjecture we establish that
the M\"obius function is orthogonal to any uniquely ergodic dynamical system
with singular spectrum.

Our proof uses a spectral approach based on the classical methods introduced
in \cite{Coquet-France} and intensively used to study the spectrum of arithmetical
$q$-multiplicative functions \cite{Kamae},\cite{Coquet}.
In \cite{Queffelec1}, \cite{Queffelec2},  M. Queffelec used the
standard method of Riesz products to obtain more results on the spectrum of
$q-$multiplicative functions. In a forthcoming paper \cite{elabdal-lem}, the first
author and M. Lema\'nczyk give a new proof of all these results based on the Bourgain
methods introduced in the context of generalized Riesz products associated to the
spectrum of rank one maps \cite{Bo2}. In addition, they establish the orthogonality of
M\"obius to $q-$multiplicative functions and, as a consequence, to any Morse
sequences \cite{AKL}.
\vspace{0.2cm}

\paragraph{\bf The random M\"obius function}
In this paper, following Ng \cite{Ng}, we  \enquote{simulate} randomly the behavior
of the values  of the M\"obius function in the following way.
Let $\pri$ be the set of prime numbers. A positive number $n$ is square-free
({\it {quadratfrei}}) if $p^2 \nmid n$ for every $p \in \pri$ $\big
($where $a \nmid b$ means $a$ does not divide $b$, and
$a,b \in \N$ $\big).$  Denote the set of all square-free numbers by $\Q$ and
$\omega(n)$ the number of distinct prime factors of $n$. Thus, for any $n \in \Q$,
we have $\mu(n)=1$ if $\omega(n)$ is even and $\mu(n)=-1$ if $\omega(n)$ is odd.
To simulate randomly this behavior
let $\epsilon_n$ be a sequence  of independent Rademacher random variables, indexed by
$n \in \Q$, that is
\begin{equation}\label{rademacher}
\P(\epsilon_n =1)=\P(\epsilon_n =-1)=\frac{1}{2} \qquad  \mbox{independently for each}\  n \in\Q.
\end{equation}
The random M\"obius function $\mu_{\rand}$ is then defined by
\begin{equation}\label{murand}
\mu_{\rand}(n)=\begin{cases}
\displaystyle \epsilon_n &{\rm {~~if~~}} n \in \Q \\
0 & n\in \mbox{otherwise} .
              \end{cases}
\end{equation}
In this work we prove that almost surely the random M\"obius function is orthogonal to
any topological dynamical system with zero entropy. Our main tool is based on a
concentration inequality due to Hoeffding and Azuma combined with a Borel-Cantelli
argument.

\begin{rem}
The probability that $\mu_{rand}  = \mu $ is zero.\medskip
\end{rem}

The rest of the paper is organized as follows: in section 2, we recall some basic
facts from the spectral analysis of dynamical systems, we give  the definition of
topological entropy and  state our main results.
In section 3, we prove our results on the deterministic M\"obius function
and finally in section 4 we prove our result for the random case.
In the Appendix we added some results that, though they  not
necessary for the proof (we only need a weaker version), may be
useful for some future generalizations.

\section{Main results and some basic facts
from spectral theory for dynamical systems}

A dynamical system is a pair $(X,T)$ where $X$ is a space
(equipped with a topology, a metric, or a probability measure) and $T$
is a measurable bijection $T:X\to X$.
In this paper we will fix  $X$ to be a {\em metric compact space} and $T$ a continuous function.
We will consider three types of dynamical systems: $(a)$
 dynamical system in a measurable setting ($(a)'$ weakly mixing or
$(a)''$ uniquely ergodic) and $(b)$ topological with zero entropy.
Our main problem will be to study {\em dynamical sequences}
$a_{n}=f (T^{n}x),$ or   {\em dynamical processes} $Y_{n}=f\circ T^{n},$
generated inside some dynamical system via a continuous function $f$.

\subsection{\bf Dynamical systems in a measurable setting}

We endow the space $X$ with a probability measure structure $(\A,P)$.
We also require  $T$ to be bimesurable and to preserve $P$,
i.e. $P(T^{-1}(A))=P(A)$, for every $A \in \A$. The dynamical system
is ergodic if the $T$-invariant set is trivial:
$P(T^{-1}(A) \triangle A)=0 \Longrightarrow P (A) \in \{0,1\}$.
 $T$ induces an
operator $U_T$ in $L^p(X)$ via $f \mapsto U_{T} (f)= f \circ T$
called Koopman operator.
For  $p=2$ this operator is unitary and its spectral
resolution induces  a spectral decomposition of $L^{2} (X)$ \cite{parry}:
\[
L^2(X)=\bigoplus_{n=0}^{+\infty}C(f_i) {\rm {~~and~~}}
\sigma_{f_1}>> \sigma_{f_2}>>\cdots
\]
where
\begin{itemize}
\item $\{f_{i} \}_{i=1}^{+\infty}$ is a family of functions in $L^{2} (X)$;
\item $C(f)\setdef \overline{\rm {span}}\{U_T^n(f): n \in \Z\}$ is the cyclic
space generated by $f \in L^2(X)$;
\item  $\sigma_f$ is the {\em spectral measure} on the circle generated by $f$
via the Bochner-Herglotz relation
\begin{equation}\label{fspmeasure}
\widehat{\sigma_f}(n)=<U_T^nf,f>=\int_X f \circ T^n(x) \overline{f}(x)dP(x);
\end{equation}
\item for any two measures on the circle $\alpha$ and $\beta$,
$\alpha >> \beta$ means $\beta $ is absolutely continuous  with respect to
$\alpha$: for any Borel set, $\alpha(A)=0 \Longrightarrow \beta(A)=0$.
The two measures $\alpha$ and  $\beta$ are equivalent if  and only if
$\alpha>>\beta$ and $\beta>>\alpha$. We will denote measure equivalence by
$\alpha \sim \beta$.
\end{itemize}
The spectral theorem ensures this spectral decomposition is unique up to
isomorphisms.
The {\em maximal spectral type} of $T$ is the equivalence class of the Borel
measure $\sigma_{f_1}$. The multiplicity function
$\M_{T} : \T \longrightarrow \{1,2,\cdots,\} \cup \{+\infty\}$ is defined
$\sigma_{f_1}$ a.e. and
\[
\M_T(z)=\ds \sum_{n=1}^{+\infty}\1_{Y_j}(z), \quad {\rm  where}, \ Y_1=\T \ {\rm and}\
Y_j={\rm{~supp~}}\frac{d\sigma_{f_j}}{d\sigma_{f_1}} \quad \forall j \geq 2.
\]
An integer $n \in \{1,2,\cdots,\} \cup \{+\infty\}$ is called an essential value of
$M_T$ if $\sigma_{f_1}\{ z \in \T : M_T(z)=n\}>0$. The multiplicity
is uniform or homogenous if there is only one essential value of $M_T$.
The essential supremum of $M_T$ is called the maximal  spectral multiplicity of $T$.
The map $T$
\begin{itemize}
\item has simple spectrum if $L^2(X)$ is reduced to a
single cyclic space;
\item has discrete spectrum if $L^2(X)$ has an
orthonormal basis consisting of eigenfunctions of $U_T$
(in this case $\sigma_{f_1}$ is a discrete measure);
\item has Lebesgue spectrum (resp. absolutely continuous,
singular spectrum) if
$\sigma_{f_1}$ is equivalent  (resp. absolutely
continuous, singular) to the Lebesgue measure.
\end{itemize}
The {\em reduced spectral type } of the dynamical system  is its spectral type on
the $L_0^2 (X)$ the space of square integrable functions with zero mean.
\begin{defn}\label{defsingular}
Two dynamical systems are called {\em {spectrally disjoint}} if their reduced
spectral types are mutually singular.
\end{defn}

In this paper we consider two types of  measurable dynamical systems:
\begin{itemize}
\item [$(a)'$]  $(X,T,\A)$ is {\em uniquely ergodic} if
 $X$ is a compact metric space, $T$ is a homeomorphism and there exists a
unique ergodic probability measure $P$ ;
\item [$(a)''$]
$(X,T,\A,P)$ is {\em uniquely weakly mixing} if  there exists a
unique ergodic probability measure $P$ and $\sigma_{f_{1}}$
is the sum of the Dirac  measure on zero and a continuous measure.
\end{itemize}
With the Jewett result \cite{Jewett} in mind, it is easy to see that we
still deal with a large class of dynamical systems.

\subsection{\bf Topological dynamical systems and topological entropy}
The pair  $(X,T)$ is called {\em  topological dynamical system} if
$X$ is a compact metric space and $T:X\to X$ is a homeomorphism.

There are different ways to define the topological entropy for such a system.
Here we use the one introduced by Bowen \cite{Bowen} based on
$(m,\epsilon)$-spanning sets.
For each $m \in \N$ we  define a new distance $d_m$ given by

\begin{equation}\label{mTdistance}
d_m(x,y)=\max_{0 \leq k \leq m-1}d(T^kx,T^ky), \qquad x,y\in X.
\end{equation}
Two points in $X$ are $\epsilon$-close with respect to the distance $d_m$
if their iterates under $T$ stay $\epsilon$-close up to time $m-1$.
Note that the definition of $d_m$ depends on the transformation $T$.
The open ball with respect to this metric

\begin{equation}\label{mTball}
B_{d_m}(x, \epsilon) = \{y\in X:~ {\rm ~{such~that~}} d(T^kx, T^ky) < \epsilon
\ \forall \  0 \leq k \leq m-1\}
\end{equation}
consist of all points whose trajectories up to time $m-1$ remain $\epsilon$-close
to the finite orbit segment
$\{x, Tx,\cdots, T^{m-1}x\}$.  Since $X$ is compact, for any $m\in \mathbb{N}$
and $\epsilon >0$, there exists a finite
set of points $R(m,\epsilon)\subset X$ of minimal cardinality $r(m,\epsilon)$

\begin{equation}\label{Rset}
R(m,\epsilon)= \left \{ x^{(m)}_{1},\dotsc, x^{(m)}_{r (m,\epsilon)} \right\}\subset X,
\end{equation}
 such that
\begin{equation}\label{covering}
X =\bigcup_{j=1}^{r (m,\epsilon)} B_{d_m}(x^{(m)}_{j},\epsilon).
\end{equation}
The cardinality $r(m,\epsilon)$ is then the minimal number of balls we need
to describe all possible segments of trajectory of length $m$.
The {\em topological entropy} of $(X,T)$ is defined by

\begin{equation}\label{top-entr}
h(T)=\lim_{\epsilon \longrightarrow 0}
\limsup_{m \longrightarrow +\infty}\frac{1}{m}\log\left[ r(m,\epsilon) \right].
\end{equation}
A nice account on the topological entropy may be found in \cite{ Walters},\cite{PYuri}.
With these definitions,  the dynamical system $(X,T)$ has {\em zero topological entropy}
if for any positive constant $\eta >0$ there exists a positive constant
$\epsilon_{0} (\eta)>0$ and a positive integer $m_{0} (\eta)>0$ such that
\begin{equation}\label{zerote}
r (m,\epsilon )< e^{m\eta } \qquad \forall \epsilon<\epsilon_{0} (\eta ),\
\forall m>m_{0} (\epsilon ).
\end{equation}
This property will be crucial to prove our result.

\subsection{\bf Spectral measure of a sequence}\label{sequence-spectral-measure}
The notion  of  spectral measures for sequences is introduced by Wiener in
his 1933 book \cite{Wiener}. More precisely, Wiener considers the space $S$ of
complex bounded sequences $g =(g_{n})_{n \in \N}$ such that
\begin{equation}\label{Sspace}
\lim_{N \longrightarrow +\infty}
\frac{1}{N}\sum_{n=0}^{N-1}g_{n+k}\overline{g}_{n}=F(k)
\end{equation}
exists for each integer $k \in \N$. The sequence $F (k)$ can be extended to negative
integers by setting
\[
F(-k)=\overline{F (k)}.
\]
It is well known that $F$ is positive definite on $\Z$ and therefore (by
Herglotz-Bochner theorem) there exists a unique positive finite measure $\sigma_g$ on
the circle $\T$ such that the Fourier
coefficients of $\sigma_g$ are given by the sequence $F$.
Formally, we have
\[
\widehat{\sigma_g}(k)\stackrel{\rm{def}}{=}
\int_{\T} e^{-ikx} d\sigma_{g }(x) = F(k).
\]
The measure $\sigma_g $ is called the {\em spectral measure of the sequence $g$}.
\medskip

The orthogonality issue of the M\"{o}bius function
may  be connected to the spectral analysis of
the sequence $\mu (n)$ on one side  and the nature of the spectral type of the dynamical sequence
$g_{n}=f (T^{n}x)$ on the other.
Indeed, there exists a natural connection between the spectral measure
and the spectral type of the dynamical system.
For a uniquely ergodic system we have the following result.
\begin{lemm}\label{sp-f}
Let $(X,\A,P,T)$ be a uniquely ergodic topological dynamical system. Then, for
any $f \in C(X)$, for every $x \in X$, the sequence
$g_{n}=f(T^nx)$ belongs to the Wiener space $S$ and its spectral measure is exactly
the spectral measure of the function $f$ given by
\[
\widehat{\sigma}_f(k)=<U^{k}_T(f),f>=\int f \circ T^{k}(x). \overline{f(x)} dP(x),
\]
where $U_T$ is a unitary operator on $L^2(X)$ defined by $f \mapsto U_{T} (f)= f \circ T.$
\end{lemm}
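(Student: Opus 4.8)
The plan is to reduce the statement to the classical equivalence between unique ergodicity and \emph{uniform} convergence of Birkhoff averages. First I would note that $g=(g_n)$ is bounded: since $X$ is compact and $f\in C(X)$, we have $|g_n|=|f(T^nx)|\le\|f\|_\infty$ for all $n$, so the only thing to check for membership in $S$ is the existence of the limit \eqref{Sspace}. Next, for a fixed $k\in\N$, I would rewrite the correlation in \eqref{Sspace} as a Birkhoff average of a single continuous function. Indeed,
\[
g_{n+k}\,\overline{g_n}=f(T^{n+k}x)\,\overline{f(T^nx)}=h_k(T^nx),
\qquad h_k\setdef (f\circ T^k)\cdot\overline{f},
\]
and $h_k\in C(X)$ because $T$ is a homeomorphism (so $T^k$ is continuous) and products and conjugates of continuous functions are continuous. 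Hence $\frac1N\sum_{n=0}^{N-1}g_{n+k}\overline{g_n}=\frac1N\sum_{n=0}^{N-1}h_k(T^nx)$.

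The key step is to invoke unique ergodicity in the following classical form: if $(X,T)$ is uniquely ergodic with invariant probability measure $P$, then for every $h\in C(X)$ the averages $\frac1N\sum_{n=0}^{N-1}h(T^nx)$ converge to $\int_X h\,dP$, uniformly in $x\in X$ (see e.g. \cite{parry}, \cite{ Walters}). Applying this to $h=h_k$ gives, for every $x\in X$ and every $k\ge 0$,
\[
F(k)=\lim_{N\to\infty}\frac1N\sum_{n=0}^{N-1}g_{n+k}\overline{g_n}
=\int_X (f\circ T^k)(y)\,\overline{f(y)}\,dP(y)=\langle U_T^k f,f\rangle .
\]
In particular the limit exists for each $k$, so $g\in S$; and the right-hand side is precisely $\widehat{\sigma}_f(k)$ by the Bochner–Herglotz relation \eqref{fspmeasure}. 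The extension to negative $k$ is automatic, since $F(-k)=\overline{F(k)}$ and likewise $\widehat{\sigma}_f(-k)=\overline{\widehat{\sigma}_f(k)}$ because $\sigma_f$ is a positive measure.

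To conclude I would appeal to the uniqueness clause of the Herglotz–Bochner theorem: $\sigma_g$ is by definition the unique positive finite measure on $\T$ with Fourier coefficients $(F(k))_{k\in\Z}$, and $\sigma_f$ is a positive finite measure with exactly these Fourier coefficients, so $\sigma_g=\sigma_f$. I do not expect any real obstacle here: the only nontrivial ingredient is the uniform ergodic theorem for uniquely ergodic systems, which is standard, and everything else is simply unwinding the definitions of the Wiener space $S$ and of the spectral measure of the function $f$. (The pointwise-in-$x$ statement in fact needs the full uniform convergence, not just $P$-a.e. convergence from Birkhoff's theorem, which is exactly why unique ergodicity is used rather than mere ergodicity.)
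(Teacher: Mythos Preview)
Your proposal is correct and follows essentially the same approach as the paper: both rewrite the correlation $g_{n+k}\overline{g_n}$ as the Birkhoff average of the continuous function $h_k=(f\circ T^k)\cdot\overline f$ and then invoke unique ergodicity to identify the limit with $\widehat{\sigma}_f(k)$. Your write-up is in fact somewhat more careful than the paper's, making explicit the boundedness of $g$, the continuity of $h_k$, the extension to negative $k$, and the appeal to the uniqueness clause of Herglotz--Bochner.
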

\begin{proof} Let $g_{n}= f(T^nx)$. Then, for any $N>1$,
\[
\frac{1}{N}\sum_{n=0}^{N-1}g_{n+k}\overline{g_{n}}
 =  \frac{1}{N}\sum_{n=0}^{N-1}f (T^{n+k}x)\overline{f (T^{n})}
  =  \frac{1}{N}\sum_{n=0}^{N-1} \left[  (f\circ T^{k}) \cdot \overline{f}\right] (T^{n}x).
\]
By the unique ergodicity of the system, the right-hand side converges
\[
\frac{1}{N}\sum_{n=0}^{N-1} \left[  (f\circ T^{k}) \cdot \overline{f}\right] (T^{n}x)
\tend{N}{\infty} \int (f \circ T^k)(y) \cdot \overline{f(y)} dP(y) =
\widehat{\sigma_{f}}{(k)}.
\]
Then the sequence $(g_{n})$ belongs to the Wiener space $S$ and
its spectral measure coincides with the spectral
measure of $f$
\[
 \widehat{\sigma}_{g } (k)= \lim_{N \longrightarrow +\infty}
\frac{1}{N}\sum_{n=0}^{N-1}g_{n+k}\overline{g_{n}} =
\int (f \circ T^k)(y) \cdot \overline{f(y)} dP(y)
=\widehat{\sigma_{f}}{(k)}.
\]
The proof of the lemma is complete.
\end{proof}

\begin{rem}\label{Birk}
In the case of any ergodic system (not necessarily uniquely ergodic), the
result of Lemma \ref{sp-f}  above still holds for almost every $x\in X$,
by Birkhoff theorem.
\end{rem}

Therefore,  the orthogonality issue of the M\"{o}bius function
seems to be related  to the computation of
the M\"{o}bius spectral measure or, in other words, the
self-correlations of the M\"obius function.
On the other hand, the weaker form of the Elliott conjecture \cite{Elliott-C}  implies that
the spectral measure of the M\"{o}bius function is up to a constant
the Lebesgue measure on the circle, i.e.
\begin{conj}[of Elliott]\cite{Elliott-C}
 \begin{eqnarray}\label{Elliot}
  \lim_{N \longrightarrow +\infty} \frac{1}{N}\sum_{n=1}^{N}\mu(n)\mu(n+h)=\begin{cases}
0 &{\rm {~~if~~}} h \neq 0 \\
\displaystyle \frac{6}{\pi^ 2} &{\rm {~~if~not~}}.
\end{cases}
 \end{eqnarray}
\end{conj}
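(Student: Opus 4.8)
The plan is to read the conjectured identity \eqref{Elliot} through the Wiener spectral formalism of Section~\ref{sequence-spectral-measure}, and then to isolate the single analytic input that is actually needed. Since $\mu$ is real-valued, $F(-h)=F(h)$, so \eqref{Elliot} is the conjunction of two statements: first, that the averages $F(h)=\lim_{N\to+\infty}\frac{1}{N}\sum_{n=1}^{N}\mu(n)\mu(n+h)$ exist for every $h\in\Z$, which is exactly the assertion that the M\"obius sequence lies in Wiener's space $S$ and hence carries a spectral measure $\sigma_\mu$ on $\T$ with $\widehat{\sigma_\mu}(h)=F(h)$; and second, that $F(0)=\frac{6}{\pi^2}$ while $F(h)=0$ for $h\neq 0$. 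Normalized Lebesgue measure $\lambda$ on $\T$ is the unique probability measure whose Fourier coefficients are $\widehat{\lambda}(h)=\1_{\{h=0\}}$, so the second statement is precisely $\widehat{\sigma_\mu}(h)=\frac{6}{\pi^2}\,\widehat{\lambda}(h)$ for all $h$, that is $\sigma_\mu=\frac{6}{\pi^2}\,\lambda$ by uniqueness in the Herglotz-Bochner theorem. Thus Step~1 is the purely formal observation that \eqref{Elliot} is equivalent to the statement that the spectral measure of the M\"obius function is Lebesgue up to the constant $\frac{6}{\pi^2}$.

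Step~2 disposes of the diagonal, which is classical: $\frac{1}{N}\sum_{n\le N}\mu(n)^2=\frac{1}{N}\#\{n\le N:\,n\in\Q\}$, and substituting $\1_{\Q}(n)=\sum_{d^2\mid n}\mu(d)$ and bounding the tail gives the limit $\sum_{d\ge 1}\mu(d)/d^2=1/\zeta(2)=\frac{6}{\pi^2}$. Hence the whole content of \eqref{Elliot} reduces to Step~3: for each fixed $h\ge 1$ the off-diagonal average $\frac{1}{N}\sum_{n\le N}\mu(n)\mu(n+h)$ exists and equals $0$; via Step~1 this is exactly what would force $\sigma_\mu$ to be continuous, in fact absolutely continuous. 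To attack Step~3 one would pass to the exponential sums $S_N(\alpha)=\sum_{n\le N}\mu(n)e(n\alpha)$, use the Parseval-type identity $\frac{1}{N}\sum_{n\le N}\mu(n)\mu(n+h)=\frac{1}{N}\int_0^1|S_N(\alpha)|^2 e(h\alpha)\,d\alpha+O(h/N)$, split into major and minor arcs, and on the minor arcs combine Davenport's bound $\max_\alpha|S_N(\alpha)|\ll_A N/(\log N)^A$ with a bilinear Vaughan/Heath-Brown decomposition of $\mu$ into type-I and type-II sums, seeking cancellation in the resulting bilinear forms uniformly in the shift $h$; on the major arcs one would need each rational $a/q$ with small $q$ to contribute negligibly, exploiting the behaviour of $\mu$ along residue classes.

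Step~3 is where the real difficulty lies, and I do not expect it to be tractable with the tools available here: Davenport's estimate is an $L^\infty$ bound on $S_N(\alpha)$ and does not by itself imply $\frac{1}{N}\int_0^1|S_N(\alpha)|^2 e(h\alpha)\,d\alpha\to 0$, because $|S_N(\alpha)|^2$ can retain positive $L^1$-mass in shrinking neighbourhoods of rationals with bounded denominator; squeezing out the two-point correlation of $\mu$ demands strictly more than the pointwise cancellation of linear twists that is currently known. Indeed \eqref{Elliot} is the two-point Chowla-type correlation for the M\"obius function, for which no unconditional proof is available, which is exactly why in the remainder of the paper we take \eqref{Elliot} (equivalently $\sigma_\mu\sim\lambda$) as a hypothesis rather than a theorem, and why the unconditional results of Davenport, Green-Tao and Bourgain-Sarnak-Ziegler reach only those dynamical sequences whose correlation with $\mu$ reduces to linear or nilpotent twists rather than to the full off-diagonal cancellation.
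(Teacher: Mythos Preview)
Your reading is correct, and there is nothing to compare against: the statement is presented in the paper as a \emph{conjecture} of Elliott, not as a theorem, and the paper offers no proof of it. Your proposal recognizes this explicitly --- Step~3 is the two-point Chowla-type correlation, and you openly flag it as intractable with current tools --- so there is no gap to report; your write-up is a sound diagnosis of why \eqref{Elliot} remains open, together with a correct handling of the diagonal case.

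The paper's surrounding discussion matches your Steps~1 and~2 almost exactly: Remark~(a) following the conjecture is your Step~1 (the equivalence of \eqref{Elliot} with $\sigma_\mu=\tfrac{6}{\pi^2}\lambda$), and the computation $\|P_n\|_2^2\to 6/\pi^2$ in the paragraph on $L^1$-flatness is your Step~2. The one observation the paper makes that you do not is that \eqref{Elliot} would follow from the $L^1$-flatness of the polynomials $P_n(\theta)=\frac{1}{\sqrt n}\sum_{j\le n}\mu(j)e^{ij\theta}$, i.e.\ from $\big\|\,|P_n|^2/\|P_n\|_2^2 - 1\,\big\|_1\to 0$; this is a natural sufficient condition in the spectral-measure language and is closely related to your Parseval identity, but it is of course equally open.
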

P.D.T.A. Elliott writes in his 1994's AMS Memoirs that ``even the simple particular
cases of the correlation (when $h=1$ in \eqref{Elliot}) are not well understood.
Almost surely the M\"obius function satisfies \eqref{Elliot} in this case, but at
the moment  we are unable to prove it.''
\begin{rem} {\it (a)} \eqref{Elliot} implies that the sequence $\mu(n)$ belongs to
the Wiener space $S$ and its spectral measure is exactly (up to a normalization
constant) the Lebesgue measure on the circle.

 {\it (b)} An alternative way to state Elliott conjecture is the following
\cite{Elliott-C}. Let $a,b,A$ and $B$ be integers for
which $aB \neq Ab$. Then
\[
\frac{1}{N}\sum_{n=1}^{N}\mu(an+b)\mu(An+B)\tend{N}{+\infty}0,
\]
\end{rem}

Elliott conjecture may be seen as a consequence of
the $L^1$-flatness of the trigonometric polynomials with
M\"obius coefficients.
A sequence of polynomials $P_{n} (\theta)$  is  $L^1$-flat if
\[
\left\|  \frac{|P_{n}|}{\|P_n\|_{2} }-1   \right\|_{1} \tend{n}{\infty} 0.
\]
Suppose the following sequence of polynomials   with
M\"obius coefficients
\[
P_{n} (\theta)= \frac{1}{\sqrt{n}} \sum_{j=1}^{n} \mu (j) e^{ij\theta}
\]
satisfies
\[
 \int_{0}^{2\pi }
\left|\frac{ |P_{n} (\theta ) |^{2} }{\|P_{n}\|_{2}^{2}  } -1 \right |
\frac{d\theta }{2\pi }
\tend{N}{+\infty}0.
\]
where the normalization constant
\[
\|P_{n}\|_{2}^{2} =\int_{0}^{2\pi } |P_{n} (\theta ) |^{2} \frac{d\theta }{2\pi }
= \frac{1}{n} \sum_{j=1}^{n} \mu^{2} (j)\tend{N}{+\infty} \frac{6 }{\pi^{2} } >0
\]
corresponds to the fraction of square free integers in the interval $[0,n]$.
Then this sequence is  $L^1$-flat and the spectral measure for $\mu $ is
the Lebesgue measure. Indeed
\begin{align*}
\left|\frac{1}{n} \sum_{j=1}^{n} \mu (j) \mu (j+k) \right | &=
 \left|\int_{0}^{2\pi } e^{-ik\theta } |P_{n} (\theta ) |^{2}
\frac{d\theta }{2\pi }\right |
\cr
&\leq \|P_{n}\|_{2}^{2} \left[
\int_{0}^{2\pi }
\left|\frac{ |P_{n} (\theta ) |^{2} }{\|P_{n}\|_{2}^{2}  } -1 \right |
 + \left|\int_{0}^{2\pi } e^{-ik\theta } \frac{d\theta }{2\pi } \right | \right]
\end{align*}
Then
\[
\lim_{n\to\infty} \frac{1}{n} \sum_{j=1}^{n-1} \mu (j) \mu (j+k) = 0 \quad \forall
k\neq 0.
\]

In his lectures \cite{sarnak}  P. Sarnak constructs a uniquely ergodic topological dynamical system
called the {\em M\"obius flow}. For this dynamical system the M\"obius function is
a generic point and the topological entropy of this system is positive.\medskip

\noindent{} We are now able to state our main results.

\begin{thm}[{\bf  Main result 1}]\label{th4} Assume  the M\"obius function is
orthogonal to any singular dynamical process $Y_{n}$ on a compact metric space\footnote{Note that the notion of singular process gives an alternative definition of deterministic sequence which is in general not equivalent to the notion of 
deterministic  sequence based on the topological entropy.},
and in the M\"obius flow
conditional expectation preserves the space of continuous functions.  More precisely
we assume
\begin{equation}
\frac{1}{N}\sum_{n=1}^{N} Y_{n} (w)\  \mu(n) \  \tend{N}{+\infty}0\qquad
\forall w\in X.
\end{equation}
Then the spectral measure of $\mu (n)$ is absolutely continuous with
log integrable  density $\rho$.
\end{thm}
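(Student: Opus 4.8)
\quad The plan is to transfer the problem to Sarnak's M\"{o}bius flow $(X,T)$, which is uniquely ergodic with unique invariant measure $P$ and carries a continuous function $f\in C(X)$ and a point $x_{0}\in X$ with $\mu(n)=f(T^{n}x_{0})$ for all $n$. By Lemma~\ref{sp-f} the sequence $\mu(n)$ then lies in the Wiener space $\S$ and its spectral measure $\sigma_{\mu}$ equals the spectral measure $\sigma_{f}$ of $f$ in $L^{2}(X,P)$, so it suffices to prove $\sigma_{f}$ is absolutely continuous with $\log$-integrable density. First I would isolate the singular part: Lebesgue-decompose $\sigma_{f}=\sigma_{f}^{ac}+\sigma_{f}^{s}$ on the circle, pick a Borel set $E\subset\T$ carrying the absolutely continuous part, and transport this through the spectral (Kolmogorov) isomorphism $C(f)\cong L^{2}(\sigma_{f})$, under which $f$ corresponds to the constant $1$. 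This produces $f=g+h$ in $L^{2}(X,P)$, with $g$ corresponding to $\1_{E}$ and $h$ to $\1_{\T\setminus E}$; hence $\langle U_{T}^{j}g,h\rangle=0$ for every $j\in\Z$, $\sigma_{g}=\sigma_{f}^{ac}$ and $\sigma_{h}=\sigma_{f}^{s}$. Since $\widehat{\sigma_{f}}(k)=\widehat{\sigma_{\mu}}(k)\in\R$ the measure $\sigma_{f}$ is symmetric, so this splitting commutes with complex conjugation and $g,h$ may be taken real. The standing hypothesis on the M\"{o}bius flow is used exactly here, to guarantee that the projection $f\mapsto h$ keeps us inside $C(X)$: thus $h\in C(X)$ and $g=f-h\in C(X)$.

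Next I would feed $h$ into the orthogonality hypothesis. The dynamical process $Y_{n}=h\circ T^{n}$ on the compact metric space $X$ has, by Lemma~\ref{sp-f} applied to the continuous function $h$, Wiener spectral measure $\sigma_{h}=\sigma_{f}^{s}$ at the point $x_{0}$, which is singular; so $(Y_{n})$ is a singular dynamical process and the hypothesis yields $\frac1N\sum_{n=1}^{N}\mu(n)\,h(T^{n}x_{0})\to 0$. Substituting $\mu(n)=g(T^{n}x_{0})+h(T^{n}x_{0})$ and invoking unique ergodicity once more for the continuous functions $g\cdot h$ and $h^{2}$, the left-hand side converges to $\langle g,h\rangle+\|h\|_{2}^{2}$, while $\langle g,h\rangle=0$ by the spectral orthogonality of $g$ and $h$. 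Hence $\|h\|_{2}=0$, so $\sigma_{f}^{s}=\sigma_{h}=0$ and $\sigma_{\mu}=\sigma_{f}$ is absolutely continuous, say $d\sigma_{\mu}=\rho\,d\theta$ with $\rho\ge0$ and $\sigma_{\mu}(\T)=\widehat{\sigma_{\mu}}(0)=6/\pi^{2}$.

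It remains to prove $\log\rho\in L^{1}(\T)$, and this is the step I expect to be the genuine obstacle. I would use Szeg\H{o}--Kolmogorov prediction theory: for an absolutely continuous spectral measure, $\int_{\T}\log\rho\,d\theta>-\infty$ is equivalent to $\mu(n)$ \emph{not} being deterministic in Wiener's linear sense, i.e.\ to $f\notin\overline{\mathrm{span}}\{U_{T}^{n}f:n\ge1\}$ in $L^{2}(X,P)$. Thus the task reduces to showing $\mu$ is not linearly deterministic, and the natural input is that the M\"{o}bius flow has \emph{positive} topological entropy, hence positive measure-theoretic entropy carried by the finite generating partition $\alpha$ into the level sets of $f$. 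The difficulty is that positive Kolmogorov--Sinai entropy does not by itself force a positive linear one-step prediction error for $\mu(n)$; closing this gap would require either extracting a sharper spectral consequence of positive entropy in this generating setting --- a Lebesgue component of $L^{2}(X,P)$ genuinely charged by $f$, in the spirit of the Rokhlin--Sinai theorem --- or re-running the second-paragraph argument against a family of singular processes approximating the deterministic component of the Wold decomposition of $f$, where obtaining an approximation uniform enough to apply the orthogonality hypothesis is the crux.
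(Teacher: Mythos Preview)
Your first two paragraphs do establish absolute continuity, and the mechanism is the same as the paper's: split $f$, apply the orthogonality hypothesis to the ``bad'' piece, and use unique ergodicity plus orthogonality of the pieces to kill it. The gap is precisely where you locate it, in the $\log$-integrability of $\rho$, and the reason it appears is that you chose the wrong decomposition. You split $f$ according to the \emph{Lebesgue} decomposition of $\sigma_{f}$ on $\T$; the paper instead applies the \emph{Wold} decomposition to the stationary process $Y_{n}=f\circ S^{n}$, writing $Y_{n}=Y_{n}^{r}+Y_{n}^{s}$ with $Y_{n}^{r}$ regular and $Y_{n}^{s}$ singular (i.e.\ deterministic in the prediction-theoretic sense). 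This is exactly the ``deterministic component of the Wold decomposition'' you mention in your last sentence, but it should be the starting point, not a fallback.

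Two consequences of switching to Wold. First, the technical hypothesis that ``conditional expectation preserves continuous functions'' is tailored to Wold: the singular part $Y_{0}^{s}$ is obtained as a conditional expectation onto the remote past, so the hypothesis makes $Y_{0}^{s}$ (hence $Y_{0}^{r}$) continuous. Your spectral projection $f\mapsto h$ onto $\mathbf{1}_{\T\setminus E}$ is \emph{not} a conditional expectation, so your appeal to the hypothesis to get $h\in C(X)$ is not justified as stated. Second, once the same orthogonality/ergodic-theorem argument gives $\int (Y_{0}^{s})^{2}\,d\mathbb{P}=0$, the process is purely regular, and by the Szeg\H{o}--Kolmogorov criterion a regular stationary process has absolutely continuous spectral measure with $\log$-integrable density (the paper cites \cite{DaCa}). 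So absolute continuity \emph{and} $\log\rho\in L^{1}$ come out together, with no need to invoke positive entropy or any further approximation. In short: replace your Lebesgue split $f=g+h$ by the Wold split $f=Y_{0}^{r}+Y_{0}^{s}$, and the proof closes in one step.
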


\begin{thm}[{\bf  Main result 2}]\label{wmt} Assume that Elliott conjecture
\eqref{Elliot} holds.
Then the M\"{o}bius function is orthogonal to any uniquely ergodic dynamical system with
singular spectrum. Precisely
\begin{equation}
\frac{1}{N}\sum_{n=1}^{N}f(T^nx)\  \mu(n) \  \tend{N}{+\infty}0\qquad
\forall f\in C_{0} (X), \forall x\in X,
\end{equation}
where $C_{0} (X)$ is the set of continuous functions on $X$ with zero mean.
\end{thm}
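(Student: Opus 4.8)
The plan is to deduce the statement from three facts available in the excerpt: Lemma~\ref{sp-f} (controlling the spectral measure of the sequence $h(T^{n}x)$ in a uniquely ergodic system), the hypothesis that the reduced maximal spectral type is singular, and Elliott's conjecture~\eqref{Elliot}. Since $\mu$ is real valued, replacing $f$ by $\overline f\in C_{0}(X)$ and conjugating, it is enough to prove that
\[
\frac1N\sum_{n=1}^{N}\mu(n)\,\overline{h(T^{n}x)}\;\tend{N}{+\infty}\;0\qquad\text{for every }h\in C_{0}(X)\text{ and every }x\in X .
\]
Fix $h$ and $x$, put $M=\|h\|_{\infty}$, and introduce the trigonometric polynomials
\[
P_{N}(\theta)=\sum_{n=1}^{N}\mu(n)e^{in\theta},\qquad Q_{N}(\theta)=\sum_{n=1}^{N}h(T^{n}x)e^{in\theta}.
\]
Orthogonality of the exponentials on $\T$ gives the identity
\[
\frac1N\sum_{n=1}^{N}\mu(n)\,\overline{h(T^{n}x)}=\int_{\T}\frac{P_{N}(\theta)}{\sqrt N}\;\overline{\Big(\frac{Q_{N}(\theta)}{\sqrt N}\Big)}\;\frac{d\theta}{2\pi},
\]
and the whole argument is an estimate of the right-hand side.

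First I would pin down the two measures that describe $|P_{N}|^{2}/N$ and $|Q_{N}|^{2}/N$. By Lemma~\ref{sp-f}, the $k$-th Fourier coefficient of $\tfrac1N|Q_{N}|^{2}\,\tfrac{d\theta}{2\pi}$ equals (up to $O(|k|)/N$ edge terms) $\tfrac1N\sum_{m}h(T^{m+k}x)\overline{h(T^{m}x)}$, which tends to $\widehat{\sigma_{h}}(k)$; since the total masses $\tfrac1N\sum_{n\le N}|h(T^{n}x)|^{2}\to\int_{X}|h|^{2}\,dP$ are bounded and the trigonometric polynomials are dense in $C(\T)$, this says $\tfrac1N|Q_{N}|^{2}\,\tfrac{d\theta}{2\pi}\to\sigma_{h}$ in the weak-$\ast$ topology; moreover, since the system has singular spectrum and $h\in L_{0}^{2}(X)$, $\sigma_{h}$ is singular with respect to Lebesgue measure. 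On the M\"obius side, the $k$-th Fourier coefficient of $\tfrac1N|P_{N}|^{2}\,\tfrac{d\theta}{2\pi}$ is $\tfrac1N\sum_{m}\mu(m)\mu(m+k)$, which by~\eqref{Elliot} tends to $6/\pi^{2}$ when $k=0$ and to $0$ otherwise; as the masses are $\le 1$ this means $\tfrac1N|P_{N}|^{2}\,\tfrac{d\theta}{2\pi}\to\tfrac{6}{\pi^{2}}\,\tfrac{d\theta}{2\pi}$ weak-$\ast$ (this is exactly the statement that, under Elliott, the spectral measure of $\mu$ is Lebesgue).

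Then comes the separation step. Given $\varepsilon>0$, since $\sigma_{h}\perp\mathrm{Leb}$ I choose a Borel set $E\subset\T$ with $\mathrm{Leb}(E)=0$ and $\sigma_{h}(\T\setminus E)=0$, then an open $U\supseteq E$ with $\mathrm{Leb}(U)<\varepsilon$ (hence $\sigma_{h}(\T\setminus U)=0$), and finally a finite union of open arcs $O\subseteq U$ with $\sigma_{h}(U\setminus O)<\varepsilon$; thus $\mathrm{Leb}(O)<\varepsilon$ and $\sigma_{h}(\T\setminus O)<\varepsilon$. Splitting the integral over $O$ and over the closed set $\T\setminus O$, applying Cauchy--Schwarz on each piece, and using the crude bounds $\tfrac1N\int_{\T}|P_{N}|^{2}\tfrac{d\theta}{2\pi}\le 1$ and $\tfrac1N\int_{\T}|Q_{N}|^{2}\tfrac{d\theta}{2\pi}\le M^{2}$, I get
\[
\Big|\frac1N\sum_{n=1}^{N}\mu(n)\,\overline{h(T^{n}x)}\Big|\;\le\;M\Big(\frac1N\int_{\overline O}|P_{N}|^{2}\,\frac{d\theta}{2\pi}\Big)^{1/2}+\Big(\frac1N\int_{\T\setminus O}|Q_{N}|^{2}\,\frac{d\theta}{2\pi}\Big)^{1/2}.
\]
Because weak-$\ast$ limits of positive measures are upper semicontinuous on closed sets (portmanteau), $\limsup_{N}\tfrac1N\int_{\overline O}|P_{N}|^{2}\tfrac{d\theta}{2\pi}\le\tfrac{6}{\pi^{2}}\mathrm{Leb}(\overline O)=\tfrac{6}{\pi^{2}}\mathrm{Leb}(O)<6\varepsilon/\pi^{2}$ and $\limsup_{N}\tfrac1N\int_{\T\setminus O}|Q_{N}|^{2}\tfrac{d\theta}{2\pi}\le\sigma_{h}(\T\setminus O)<\varepsilon$. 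Hence $\limsup_{N}\big|\tfrac1N\sum_{n\le N}\mu(n)\overline{h(T^{n}x)}\big|\le M\sqrt{6\varepsilon/\pi^{2}}+\sqrt\varepsilon$, and letting $\varepsilon\to0$ concludes. (For a merely ergodic, not uniquely ergodic, system the same works for $P$-a.e.\ $x$ via Remark~\ref{Birk}.)

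The main obstacle, and the only place where the hypothesis is really used, is the bound $\limsup_{N}\tfrac1N\int_{\overline O}|P_{N}(\theta)|^{2}\tfrac{d\theta}{2\pi}\le\tfrac{6}{\pi^{2}}\mathrm{Leb}(O)$: merely knowing $\tfrac1N\|P_{N}\|_{2}^{2}\to 6/\pi^{2}$ is of no help, since a priori the mass of $\tfrac1N|P_{N}|^{2}$ might concentrate on a small set meeting the Lebesgue-null carrier of $\sigma_{h}$. It is precisely Elliott's conjecture that forces $\tfrac1N|P_{N}|^{2}\,\tfrac{d\theta}{2\pi}$ to equidistribute toward Lebesgue measure, and then the mutual singularity of $\sigma_{h}$ and Lebesgue lets the arcs $O$ keep the two masses apart; all the rest (the Fourier identity, Cauchy--Schwarz, the portmanteau inequality, and the regularity argument producing $O$) is soft.
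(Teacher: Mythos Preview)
Your argument is correct and follows the same underlying strategy as the paper: both express $\frac{1}{N}\sum_{n\le N}\mu(n)\overline{h(T^{n}x)}$ as the inner product $\int_{\T}\frac{P_{N}}{\sqrt N}\,\overline{\frac{Q_{N}}{\sqrt N}}\,\frac{d\theta}{2\pi}$, bound it by Cauchy--Schwarz, use the weak convergence $\frac{1}{N}|P_{N}|^{2}\to\frac{6}{\pi^{2}}\,d\theta$ (Elliott) and $\frac{1}{N}|Q_{N}|^{2}\to\sigma_{h}$ (Lemma~\ref{sp-f}), and exploit the mutual singularity of the two limits. The difference is only in packaging: the paper invokes Corollary~\ref{BL}, which in turn rests on the Coquet--Kamae--Mend\`es-France upper semicontinuity of affinity (Theorem~\ref{coquet-france}), to obtain directly $\limsup_{N}\big|\frac{1}{N}\sum\mu(n)\overline{h(T^{n}x)}\big|\le G(\sigma_{\mu},\sigma_{h})=0$; you instead unroll that black box by hand, choosing the finite union of arcs $O$ that separates the singular $\sigma_{h}$ from Lebesgue and applying portmanteau on closed sets. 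Your route is more self-contained and makes transparent exactly where Elliott's conjecture is consumed (the equidistribution of $\frac{1}{N}|P_{N}|^{2}$, as you emphasise), while the paper's route is shorter and isolates the reusable inequality $\limsup|\frac{1}{N}\sum g_{n}\overline{h_{n}}|\le G(\sigma_{g},\sigma_{h})$, which it also uses in the proof of Theorem~\ref{cs}.
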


\begin{thm}[{\bf  Main result 3: Sarnak and Cellarosi-Sinai}]\label{cs}
The sequence $\mu^2(n)$   generated by the square of the M\"obius function
is spectrally orthogonal to the measure of any uniquely weakly mixing dynamical system
$f (T^{n}x)$.  More precisely
\begin{equation}
\frac{1}{N}\sum_{n=1}^{N}f(T^nx)\  \mu^{2}(n) \  \tend{N}{+\infty}0\qquad
\forall f\in C_{0} (X), \forall x\in X,
\end{equation}
where $C_{0} (X)$ is the set of continuous functions on $X$ with zero mean.
\end{thm}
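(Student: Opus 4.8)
The plan is to combine Mirsky's theorem on the autocorrelations of $\mu^{2}$ (which pins down the spectral type of the squarefree sequence as discrete) with the continuity of the reduced spectral type of a weakly mixing system, and to conclude by a spectral disjointness argument at the level of sequences. First I would recall Mirsky's theorem: for every $h$ the limit $F(h)=\lim_{N}\frac1N\sum_{n\le N}\mu^{2}(n)\mu^{2}(n+h)$ exists and is given by a convergent Euler product; hence $\mu^{2}\in S$ and a spectral measure $\sigma_{\mu^{2}}$ is well defined by $\widehat{\sigma_{\mu^{2}}}(h)=F(h)$. Next I would observe that $\sigma_{\mu^{2}}$ is \emph{purely atomic}: from $\mu^{2}(n)=\sum_{d\ge 1}\mu(d)\1_{d^{2}\Z}(n)$ and the fact that each $\1_{d^{2}\Z}$ is a trigonometric polynomial with frequencies in $\{j/d^{2}\}$, the truncations $P_{D}(n)=\sum_{d\le D}\mu(d)\1_{d^{2}\Z}(n)$ are trigonometric polynomials and $\limsup_{N}\frac1N\sum_{n\le N}|\mu^{2}(n)-P_{D}(n)|^{2}\to 0$ as $D\to\infty$ by an elementary density estimate, so $\sigma_{\mu^{2}}$ is concentrated on roots of unity of square order. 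On the other side, by Lemma~\ref{sp-f}, for any $f\in C_{0}(X)$ and any $x\in X$ the sequence $b_{n}=f(T^{n}x)$ lies in $S$ with spectral measure $\sigma_{f}$, and since the system is uniquely weakly mixing, $\sigma_{f_{1}}$ is a Dirac mass at $0$ plus a continuous measure, so $\sigma_{f}$ is \emph{continuous} for $f$ of zero mean.

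Second, I would invoke the standard spectral disjointness principle for sequences: if two bounded sequences $a,b\in S$ also admit joint correlations $\gamma(k)=\lim_{N}\frac1N\sum_{n\le N}a_{n+k}\overline{b_{n}}$ for all $k\in\Z$, then the matrix-valued sequence $\left(\begin{smallmatrix}F_{a}&\gamma\\ \overline{\gamma}&F_{b}\end{smallmatrix}\right)$ is positive definite, hence the Fourier transform of a positive matrix measure, which forces $\gamma$ to be the Fourier transform of a complex measure $\sigma_{a,b}$ with $|\sigma_{a,b}|^{2}\le\sigma_{a}\,\sigma_{b}$, so $\sigma_{a,b}\ll\sigma_{a}$ and $\sigma_{a,b}\ll\sigma_{b}$. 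Consequently, if $\sigma_{a}\perp\sigma_{b}$ then $\sigma_{a,b}=0$, so $\gamma\equiv0$; in particular $\lim_{N}\frac1N\sum_{n\le N}a_{n}\overline{b_{n}}=0$. Applied with $a_{n}=\mu^{2}(n)$ (discrete spectral measure) and $b_{n}=f(T^{n}x)$ (continuous spectral measure), this reduces the theorem to the existence of the joint correlations $\gamma(k)=\lim_{N}\frac1N\sum_{n\le N}\mu^{2}(n+k)\overline{f(T^{n}x)}$.

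Third, the heart of the argument is that these joint correlations exist (and in fact vanish). Writing $\mu^{2}(n+k)=\sum_{d^{2}\mid n+k}\mu(d)$ and splitting at a parameter $D$,
\[
\frac1N\sum_{n\le N}\mu^{2}(n+k)\overline{f(T^{n}x)}
=\sum_{d\le D}\mu(d)\,A_{N}(d,k)+\sum_{D<d\le\sqrt{N+k}}\mu(d)\,A_{N}(d,k),
\qquad A_{N}(d,k)=\frac1N\sum_{\substack{n\le N\\ d^{2}\mid n+k}}\overline{f(T^{n}x)}.
\]
Here weak mixing enters: since $(X,T,P)$ is uniquely ergodic and $\sigma_{f_{1}}$ is continuous, $T^{d^{2}}$ is ergodic with respect to $P$, hence uniquely ergodic, so $A_{N}(d,k)\to\frac1{d^{2}}\int_{X}\overline{f}\,dP=0$ as $N\to\infty$ for each fixed $d$ (using $\int f\,dP=0$). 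The tail is bounded by $\|f\|_{\infty}\sum_{D<d}\bigl(d^{-2}+N^{-1}\bigr)$, whose $\limsup$ in $N$ is $\|f\|_{\infty}\sum_{d>D}d^{-2}$, which tends to $0$ as $D\to\infty$. Hence $\gamma(k)=0$ for all $k$, which already yields the theorem at $k=0$; the spectral disjointness discussion then only explains why this is the expected conclusion.

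I expect the main obstacle to be the control of the tail $\sum_{d>D}\mu(d)\1_{d^{2}\Z}$ with enough uniformity to interchange the infinite sum over $d$ with the Cesàro average, i.e.\ the (elementary but slightly delicate) estimate $\limsup_{N}\frac1N\sum_{n\le N}\bigl(\sum_{d>D,\ d^{2}\mid n}1\bigr)^{2}=O(1/D)$, together with the point that weak mixing is precisely what upgrades unique ergodicity of $T$ to unique ergodicity of $T^{d^{2}}$, so that the arithmetic-progression Birkhoff averages converge for \emph{every} $x$ rather than almost every $x$. If one insists on the purely spectral formulation, the obstacle instead becomes showing that Mirsky's Euler-product correlation function is the Fourier transform of a discrete measure, for which the $B^{2}$-approximation by the periodic truncations $P_{D}$ is exactly the input needed.
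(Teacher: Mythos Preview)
Your argument is correct, but it takes a different route from the paper, and in fact your Steps~1--2 are superfluous scaffolding: as you yourself note, Step~3 already proves the theorem outright.

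The paper's proof is shorter and purely spectral. From Mirsky's theorem it extracts (Corollary~\ref{cor4.5}) that $\sigma_{\mu^{2}}$ is discrete; for $f\in C_{0}(X)$ in a uniquely weakly mixing system, $\sigma_{f}$ is continuous; and then it invokes the Coquet--Kamae--Mend\`es-France/Bellow--Losert affinity bound (Corollary~\ref{BL}),
\[
\limsup_{N}\Big|\frac{1}{N}\sum_{n\le N}\mu^{2}(n)\,\overline{f(T^{n}x)}\Big|\ \le\ G(\sigma_{\mu^{2}},\sigma_{f})\ =\ 0,
\]
since a discrete and a continuous measure are mutually singular. The point is that this inequality does \emph{not} require the cross-correlations $\gamma(k)$ to exist: it is proved by Cauchy--Schwarz on the polynomial densities $\rho_{g,n},\rho_{h,n}$ together with upper semicontinuity of the affinity under weak convergence (Theorem~\ref{coquet-france}). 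So the paper bypasses your Step~2 hypothesis and your Step~3 verification entirely.

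Your Step~3, by contrast, is a direct arithmetic argument: expand $\mu^{2}(n)=\sum_{d^{2}\mid n}\mu(d)$, truncate, and use that weak mixing plus unique ergodicity of $T$ force every power $T^{d^{2}}$ to be uniquely ergodic, so that arithmetic-progression Birkhoff averages converge (to $0$) for \emph{every} $x$. This is more elementary in that it does not need Mirsky's precise Euler product nor the affinity machinery---only the identity $\mu^{2}=\sum_{d}\mu(d)\1_{d^{2}\Z}$ and the tail bound $\sum_{d>D}d^{-2}\to0$. The price is the auxiliary lemma that unique ergodicity passes to powers under weak mixing; you flag this correctly as the real obstacle, and the standard proof (any $T^{m}$-ergodic measure $\nu$ satisfies $\frac{1}{m}\sum_{j}T^{j}_{*}\nu=P$, and $P$ being $T^{m}$-ergodic is extreme, forcing $\nu=P$) should be included. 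Your matrix--measure discussion in Step~2 is correct but unnecessary once Step~3 is in place; the paper's affinity bound is the cleaner way to package that spectral-disjointness intuition.
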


\begin{rem}
Theorem \ref{cs} is essentially due to Sarnak \cite{sarnak} and
Cellarosi-Sinai \cite{Cellarosi-Sinai}. Indeed, in
\cite{sarnak} and \cite{Cellarosi-Sinai} the authors construct a dynamical system
associated to the sequence $\mu^2(n)$.
The dynamical system obtained is isomorphic to the translations on a compact
Abelian group.
\end{rem}

\begin{thm}[{\bf Main result 4}]\label{m} The random M\"obius function
is orthogonal to any topological dynamical
system with zero topological entropy.
More precisely, let $(X,T)$ be a topological dynamical
system with zero topological entropy and $C (X)$ the set of continuous
complex valued functions on $X$.
Then there exists a subset $\Omega'\subset \Omega$ independent
of $x$ and $f$ such that $\mathbb{P}(\Omega')=1$ and
\begin{equation}\label{eqm1}
\frac{1}{N}\sum_{n=1}^{N}f(T^nx)\  \mu_{\rand}(n) (\omega)\  \tend{N}{+\infty}0\qquad
\forall \omega \in \Omega',\ \forall x\in X,\ \forall f\in C(X).
\end{equation}
\end{thm}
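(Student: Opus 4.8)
\textbf{Proof strategy for Theorem \ref{m}.}

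The plan is to fix a countable dense set of "test data" and apply a concentration inequality plus Borel--Cantelli, then upgrade to all continuous functions and all points by a density argument. The randomness enters only through the Rademacher variables $(\epsilon_n)_{n\in\Q}$, so for a \emph{fixed} sequence $(b_n)$ with $|b_n|\le 1$ the partial sums $S_N=\sum_{n=1}^N b_n\,\mu_{\rand}(n)$ form a sum of independent, mean-zero, bounded random variables (the terms with $n\notin\Q$ vanish, the others are $b_n\epsilon_n$). First I would invoke the Hoeffding--Azuma inequality: $\P\big(|S_N|\ge N t\big)\le 2\exp\big(-\tfrac{N^2 t^2}{2\sum_{n\le N}|b_n|^2}\big)\le 2\exp(-Nt^2/2)$. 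Summing a geometric-type bound over $N$ along a subsequence (say $N_k=k^2$, or simply all $N$ since the bound is summable for fixed $t$) and then letting $t\downarrow 0$ through a countable sequence, Borel--Cantelli gives $\frac{1}{N}S_N\to 0$ almost surely, for this one fixed $(b_n)$. Between consecutive $N_k$ the increment of $S_N$ is $O(N_{k+1}-N_k)=o(N_k)$, which controls the gaps, so the full limit along $N$ follows from the subsequential one.

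Next I would reduce the set of sequences $(b_n)$ to a countable family. By the zero-entropy hypothesis \eqref{zerote}, for each $\eta>0$ there are $\epsilon_0(\eta)$ and $m_0(\eta)$ so that the orbit segments of length $m$ are covered by $r(m,\epsilon)<e^{m\eta}$ balls $B_{d_m}(x^{(m)}_j,\epsilon)$. Fix a countable dense subalgebra $\{f_\ell\}_{\ell\ge 1}$ of $C(X)$ closed under the operations needed (e.g.\ obtained from a countable dense set in $C(X)$, which exists since $X$ is compact metric). For each $\ell$ and each block of length $m$, the value $f_\ell(T^n x)$ for $n$ in that block is, up to a uniformly small error coming from the modulus of continuity of $f_\ell$ and the choice of $\epsilon$, determined by which of the $r(m,\epsilon)$ balls the point $x$ falls into. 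Thus the collection of relevant sequences $(b_n)=(f_\ell(T^n x))$, \emph{blockwise approximated}, is effectively finite (size $<e^{m\eta}$) on each block of length $m$, hence countable overall. Applying the first step to each member of this countable family and intersecting the full-measure events yields a single $\Omega'$ with $\P(\Omega')=1$ on which $\frac1N\sum_{n\le N} b_n\mu_{\rand}(n)\to 0$ simultaneously for all of them.

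The final step is the transfer: on $\Omega'$, for arbitrary $f\in C(X)$ and $x\in X$, break $[1,N]$ into consecutive blocks of length $m$. On each block replace $f(T^n x)$ by the value attached to the ball of $R(m,\epsilon)$ containing the corresponding point; the replacement error per term is at most the modulus of continuity $\omega_f(\epsilon)$, contributing at most $\omega_f(\epsilon)$ to $\tfrac1N|S_N|$ after using $|\mu_{\rand}|\le 1$. The number of distinct "ball-indexed" block-sequences is at most $e^{m\eta}$, and since $\eta$ is arbitrary we may take $m$ large and $\eta$ correspondingly small so that $e^{m\eta}$ stays under control relative to $N$; the main sum over these finitely many admissible block-patterns is handled by the countable almost-sure convergence from step two (after arranging that all these finitely many patterns, for a suitable countable grid of $m$, $\eta$, and ball-centers, lie in our countable family). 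Letting $N\to\infty$, then $\epsilon\to 0$ (so $\omega_f(\epsilon)\to 0$), gives $\frac1N\sum_{n=1}^N f(T^n x)\mu_{\rand}(n)\to 0$.

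\textbf{Main obstacle.} The delicate point is the blockwise discretization: one must organize the covering sets $R(m,\epsilon)$, the moduli of continuity, and the Azuma estimate so that (i) the family of sequences to which Azuma is applied is genuinely countable and independent of $(x,f)$, and (ii) the entropy bound $e^{m\eta}$ is small enough not to destroy the concentration gain $e^{-Nt^2/2}$ — this forces a careful interleaving of the limits $N\to\infty$, $m\to\infty$, $\eta\to0$, $\epsilon\to0$, $t\to0$, exactly the place where zero entropy is used decisively. Everything else (independence of terms, boundedness, Borel--Cantelli, Lemma~\ref{sp-f}-type ergodic averaging) is routine.
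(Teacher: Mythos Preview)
Your strategy has the right ingredients (Hoeffding--Azuma concentration, entropy covering, Borel--Cantelli, density in $C(X)$), but Step~2 contains a genuine gap. You claim the blockwise-approximated sequences form a \emph{countable} family to which you can apply Step~1 term by term and then intersect full-measure events. They do not: on each block of length $m$ there are at most $r(m,\varepsilon)<e^{m\eta}$ patterns, but a full sequence $(b_n)_{n\ge1}$ is a \emph{concatenation} of such patterns, one per block, and up to time $N$ there are $r(m,\varepsilon)^{N/m}\le e^{N\eta}$ of those --- uncountably many as $N\to\infty$. So ``apply the first step to each member of this countable family and intersect'' does not go through. Your ``main obstacle'' paragraph hints at the correct repair (play $e^{m\eta}$ against the Azuma exponent), but the body of the proposal conflates two different organizations: per-sequence Borel--Cantelli (which needs countability, unavailable here) and a per-$N$ union bound (which would work if $\eta<t^2/2$, but which you never actually carry out).

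The paper avoids this countability trap with a different device. It introduces the block averages
\[
Y_n^{(m)}(x,\omega)=\frac1m\sum_{j=0}^{m-1}f(T^jx)\,\mu_{\rand}(n+j)(\omega)
\]
and proves (Lemma~\ref{Yb}) that for each \emph{fixed} $n$ one has $\limsup_m\sup_{x\in X}|Y_n^{(m)}(T^nx,\omega)|=0$ almost surely: the $\sup_x$ is absorbed by the entropy covering (a union over only $r(m,\varepsilon)<e^{m\eta}$ centers, with $\eta$ chosen small relative to the Azuma exponent $e^{-m t^2/(2\|f\|_\infty^2)}$), and Borel--Cantelli runs in $m$, not in $N$. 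One then intersects over the genuinely countable data $n\in\N$ and a dense sequence $\{f_\ell\}\subset C(X)$ to get the single $\Omega'$. The passage back to the Ces\`aro average in $N$ comes from the algebraic symmetry
\[
\frac1m\sum_{j=0}^{m-1}Y_j^{(N)}(T^jx,\omega)=\frac1N\sum_{n=0}^{N-1}Y_n^{(m)}(T^nx,\omega),
\]
which lets one swap the roles of $m$ and $N$. This $m\leftrightarrow N$ swap is exactly the idea that sidesteps having to control uncountably many concatenated block-sequences; it is what is missing from your outline.
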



\section{Spectral disjointness for the M\"obius function}


In this section we consider the (non random) M\"obius
function and  prove Theorem  \ref{cs},   \ref{wmt} and \ref{th4}.
Both proofs use the notion of affinity. Here we consider
only uniquely ergodic dynamical systems so the spectral
measure of the corresponding sequence is well defined.
We will see in subsection \ref{smformu2} below that the
spectral measure of $\mu^{2} (n)$ is
also well defined.
We start by giving a few preliminary results and definitions.

\subsection{\bf Affinity and correlations}
The affinity between two finite measures is defined by the integral of the corresponding
geometric mean. Is is introduced and studied in a series of papers by Matusita
\cite{Matusita1},\cite{Matusita2},\cite{Matusita3} and  it is also called
Bahattacharyya coefficient \cite{Bhatta}. It has been widely used in statistics
literature as a useful tool to quantify the similarity between two
probability distributions.  Ii is symmetric in distributions and
has direct relationships with error probability when classification or discrimination
is concerned.

Let $\M_1(\T)$ be a set of probability measures on the circle $\T$ and
$\mu,\nu \in \M_1(\T)$ two mesures in this space. There
exists a probability measure $\lambda$ such that $\mu$ and $\nu$ are absolutely
continuous with respect to $\lambda$ (take for example $\lambda=\frac{\mu+\nu}2$).
Then the {\em affinity} between $\mu$ and $\nu$ is defined by
\begin{equation}\label{affinity}
G(\mu,\nu)=\ds \int \sqrt{\frac{d\mu}{d\lambda}. \frac{d\nu}{d\lambda}} d\lambda.
\end{equation}
This definition does not depend on $\lambda$.
Affinity is related to the Hellinger distance as it can be defined as
\[
H(\mu,\nu)=\sqrt{2(1-G(\mu,\nu))}.
\]
Note that $G (\mu ,\nu )$ satisfies (by Cauchy-Schwarz inequality)
\[
 0 \leq G(\mu,\nu) \leq  1.
\]
\begin{rem}
The definition of affinity can be extended to any pair of
 positive non trivial finite measures. Indeed any such measure becomes a
probability measure if we divide by the (positive) normalization factor.
\end{rem}
It is an easy exercise to see that $G(\mu,\nu)=0$ if and only if $\mu$ and $\nu$
are mutually singular (denoted by $\mu \bot \nu$):
this means $\mu$ assigns measure zero to every set to which $\nu$ assigns a
positive probability, and vice versa. Similarly, $G(\mu,\nu)=1$ holds if
and only if $\mu$ and $\nu$ are equivalent: $\mu\ll \nu$ and $\nu\ll \mu$.
Affinity can be used to compare sequences of measures via the
following theorem. The proof may be found
in \cite{Coquet-France}.
\begin{thm}[Coquet-Kamae-Mand\`es-France  \cite{Coquet-France}]\label{coquet-france}
Let $(P_n)$ and $(Q_n)$ be two sequences of probability measures
on the circle weakly converging to the probability measures $P$ and $Q$
respectively. Then
\begin{equation}\label{limsup}
\limsup_{n \longrightarrow +\infty } G(P_n,Q_n) \leq G(P,Q).
\end{equation}
\end{thm}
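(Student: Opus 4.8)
\emph{Proof plan.} The plan is to reduce the statement to a variational description of the affinity in terms of finite partitions of the circle, and then feed in weak convergence through continuity sets. First I would record the only elementary inequality needed: for arbitrary finite measures $\mu,\nu$ with a common dominating probability measure $\lambda$ (e.g. $\lambda=\tfrac12(\mu+\nu)$) and densities $p=d\mu/d\lambda$, $q=d\nu/d\lambda$, Cauchy--Schwarz applied block by block to $\sqrt{pq}$ gives, for every finite measurable partition $\{A_1,\dots,A_k\}$ of $\T$,
\[
G(\mu,\nu)=\int_\T\sqrt{pq}\,d\lambda\ \le\ \sum_{i=1}^k\sqrt{\mu(A_i)\,\nu(A_i)},
\]
an inequality whose right-hand side no longer refers to $\lambda$ and which holds for every pair of measures.

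Next I would use this with $\mu=P_n$, $\nu=Q_n$ and a fixed partition of the circle into finitely many arcs $A_1,\dots,A_k$ whose endpoints avoid the (at most countably many) atoms of $P+Q$, so that each $A_i$ is simultaneously a $P$- and a $Q$-continuity set. For such a partition, the weak convergence $P_n\to P$, $Q_n\to Q$ forces $P_n(A_i)\to P(A_i)$ and $Q_n(A_i)\to Q(A_i)$; since the sum is finite, passing to the $\limsup$ yields
\[
\limsup_{n\to\infty}G(P_n,Q_n)\ \le\ \sum_{i=1}^k\sqrt{P(A_i)\,Q(A_i)}.
\]
The whole proof then comes down to showing that the infimum of the right-hand side over such ``continuity-arc'' partitions equals $G(P,Q)$, after which one lets the partition refine and concludes.

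That last point is where I expect the real work to be. To handle it I would take $\lambda=\tfrac12(P+Q)$ with densities $p,q$, and choose a nested sequence of arc-partitions $\pi_m$ (say $2^m$ nearly equal arcs, endpoints perturbed off the countable atom set of $\lambda$) whose join generates the Borel $\sigma$-algebra of $\T$. On a block $A\in\pi_m$ one has $P(A)/\lambda(A)=\E[p\mid\pi_m]$ and $Q(A)/\lambda(A)=\E[q\mid\pi_m]$ (constant on $A$, and the zero-$\lambda$-measure blocks contribute $0$), so
\[
\sum_{A\in\pi_m}\sqrt{P(A)\,Q(A)}=\int_\T\sqrt{\E[p\mid\pi_m]\,\E[q\mid\pi_m]}\;d\lambda .
\]
Martingale convergence gives $\E[p\mid\pi_m]\to p$ and $\E[q\mid\pi_m]\to q$ both $\lambda$-a.e. and in $L^1(\lambda)$; since the integrand is nonnegative and dominated by $\tfrac12\big(\E[p\mid\pi_m]+\E[q\mid\pi_m]\big)$, which converges in $L^1(\lambda)$ to $\tfrac12(p+q)$, a generalized dominated convergence argument (Fatou plus the $L^1$-convergent majorant, as in Pratt's lemma) shows that the integrals converge to $\int_\T\sqrt{pq}\,d\lambda=G(P,Q)$. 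Combining this with the displayed $\limsup$ bound and letting $m\to\infty$ finishes the proof.

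The points I would be most careful about are the topological/measurable bookkeeping needed to build the refining arc-partitions $\pi_m$ so that all endpoints are $(P+Q)$-null while the $\pi_m$ still generate the full Borel $\sigma$-algebra, and the justification of the generalized dominated convergence step, where there is no single dominating function but only an $L^1(\lambda)$-convergent dominating sequence. Everything else --- the blockwise Cauchy--Schwarz estimate and the use of continuity sets under weak convergence --- is routine, and essentially all the content of the theorem sits in the approximation of $G(P,Q)$ by continuity-partition sums.
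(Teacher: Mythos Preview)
The paper does not actually prove this theorem; it only states it and refers the reader to the original article of Coquet--Kamae--Mend\`es-France for the proof. So there is no ``paper's own proof'' to compare against.

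Your argument is correct and is in fact the standard route to upper semicontinuity of the affinity. The blockwise Cauchy--Schwarz bound, the use of continuity arcs under weak convergence, and the martingale identification $\sum_{A\in\pi_m}\sqrt{P(A)Q(A)}=\int\sqrt{\E[p\mid\pi_m]\,\E[q\mid\pi_m]}\,d\lambda$ are all fine. One small simplification: for the limiting step you do not really need Pratt's lemma. Since conditional expectation preserves integrals, your majorant $h_m:=\tfrac12\bigl(\E[p\mid\pi_m]+\E[q\mid\pi_m]\bigr)$ satisfies $\int h_m\,d\lambda=\int\tfrac12(p+q)\,d\lambda$ for every $m$; applying Fatou to the nonnegative sequence $h_m-\sqrt{\E[p\mid\pi_m]\E[q\mid\pi_m]}$ immediately gives $\limsup_m\int\sqrt{\E[p\mid\pi_m]\E[q\mid\pi_m]}\,d\lambda\le G(P,Q)$, and Fatou on the integrand itself gives the reverse inequality. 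The bookkeeping about choosing nested arc partitions with $(P+Q)$-null endpoints that still generate the Borel $\sigma$-algebra is straightforward: any countable dense set of endpoints suffices, and the atoms of $P+Q$ form only a countable obstruction.
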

As in the case of the affinity, this result can be generalized
to any sequence of positive non trivial finite measures $P_{n}$, $Q_{n}$
converging weakly to two positive non trivial finite measures $P$ and $Q$.

We want to use the affinity and  Theorem \ref{coquet-france}  above to estimate
the orthogonality properties of  pairs of sequences in the Wiener space
$S$ (defined in subsection \ref{sequence-spectral-measure}).
To do that we will need to replace the sequence of Fourier coefficients
$\frac{1}{n}\sum_{j=0}^{n-1} g_{j+k}\overline{g_{j}}$ by
a sequence of finite positive measures on the circle.
For any $g\in S$, we introduce  the sequence of functions
\begin{equation}\label{rhodef}
d\sigma_{g,n} (x)= \rho_{g,n} (x)\frac{dx}{2\pi },\qquad \mbox{where} \
\rho_{g,n} (x) = \left|\frac{1}{\sqrt{n}}\sum_{j=0}^{n-1}g_{j}e^{ijx}\right|^2.
\end{equation}
With this definition $\sigma_{g,n}$ defines a finite positive measure on the
circle.
Moreover  we have the relation
\[
\frac{1}{n}\sum_{j=0}^{n-1} g_{j+k}\overline{g_{j}}=
\int_{0}^{2\pi } e^{-ikx} d\sigma_{g,n} (x) \ + \ \Delta_{n,k}  =
\widehat{\sigma }_{g,n} (k)+ \Delta_{n,k},
\]
where
\[
\left| \Delta_{n,k}  \right|=
\left| \frac{1}{n}\sum_{j=n-k}^{n-1} g_{j+k}\overline{g_{j}}  \right| \leq \
\frac{k}{n} \sup_j |g_{j}|^{2} \longrightarrow_{n\to\infty} 0.
\]
Taking the limit we have
\[
\widehat{\sigma }_{g} (k) = \lim_{n\to\infty}
\frac{1}{n}\sum_{j=0}^{n-1} g_{j+k}\overline{g_{j}}= \lim_{n\to\infty}
\widehat{\sigma }_{g,n} (k)
\]
so the sequence of measures $\sigma_{g,n}$ converges weakly to $\sigma_{g}$.

To prove our theorems we will need the following result.
\begin{Cor}\cite{Bellow-Losert}\label{BL} Let $g = (g_{n})_{n\in \N},
h= (h_{n})_{n\in \N} \in S$ two non trivial sequences i.e.
$\widehat{\sigma}_{g} (0)>0$ and $\widehat{\sigma}_{h} (0)>0$. Then
\begin{equation}
\limsup_{n\to \infty} \left | \frac{1}{n}  \sum_{j=1}^{n}g_{j}\overline{h}_{j}\right|
\leq G(\sigma_g ,\sigma_h).
\end{equation}
\end{Cor}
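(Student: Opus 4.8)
\section*{Proof proposal for Corollary \ref{BL}}

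The plan is to realize the Ces\`aro average $\frac1n\sum_j g_j\overline{h_j}$ as an inner product in $L^2(\T,\frac{dx}{2\pi})$ of two trigonometric polynomials, to bound its modulus by the affinity of the two auxiliary measures $\sigma_{g,n},\sigma_{h,n}$ from \eqref{rhodef}, and then to let $n\to\infty$ using the weak convergence $\sigma_{g,n}\to\sigma_g$, $\sigma_{h,n}\to\sigma_h$ (already established in the text preceding Corollary \ref{BL}) together with Theorem \ref{coquet-france}.

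Concretely, set $P_{g,n}(x)=\frac1{\sqrt n}\sum_{j=0}^{n-1}g_je^{ijx}$ and $P_{h,n}(x)=\frac1{\sqrt n}\sum_{j=0}^{n-1}h_je^{ijx}$, so that $\rho_{g,n}=|P_{g,n}|^2$ and $\rho_{h,n}=|P_{h,n}|^2$. Orthonormality of the characters $e^{ijx}$ in $L^2(\T,\frac{dx}{2\pi})$ gives
\[
\frac1n\sum_{j=0}^{n-1}g_j\overline{h_j}=\int_0^{2\pi}P_{g,n}(x)\,\overline{P_{h,n}(x)}\,\frac{dx}{2\pi},
\]
whence, by the triangle inequality and the pointwise identity $|P_{g,n}\overline{P_{h,n}}|=\sqrt{\rho_{g,n}\rho_{h,n}}$,
\[
\Bigl|\frac1n\sum_{j=0}^{n-1}g_j\overline{h_j}\Bigr|\le\int_0^{2\pi}\sqrt{\rho_{g,n}(x)\rho_{h,n}(x)}\,\frac{dx}{2\pi}=G(\sigma_{g,n},\sigma_{h,n}),
\]
the last equality being the definition \eqref{affinity} of affinity evaluated with reference measure $\lambda=\frac{dx}{2\pi}$ (legitimate since $\sigma_{g,n},\sigma_{h,n}\ll\lambda$ with densities $\rho_{g,n},\rho_{h,n}$), read in the form extended to finite positive measures as in the Remark following \eqref{affinity}. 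Since $g,h\in S$ are bounded sequences, switching $\sum_{j=0}^{n-1}$ to $\sum_{j=1}^{n}$ changes the average by $O(1/n)$, hence does not affect the $\limsup$.

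To pass to the limit, note that $\widehat\sigma_g(0)>0$ and $\widehat\sigma_h(0)>0$ force $\widehat\sigma_{g,n}(0)=\frac1n\sum_{j=0}^{n-1}|g_j|^2>0$ and $\widehat\sigma_{h,n}(0)>0$ for all large $n$, with masses converging to $\widehat\sigma_g(0)$ and $\widehat\sigma_h(0)$; so $\sigma_{g,n},\sigma_{h,n}$ are eventually non-trivial finite positive measures of uniformly bounded mass. As recorded just before Corollary \ref{BL}, $\widehat\sigma_{g,n}(k)\to\widehat\sigma_g(k)$ and $\widehat\sigma_{h,n}(k)\to\widehat\sigma_h(k)$ for every $k\in\Z$; by density of trigonometric polynomials in $C(\T)$ and uniform boundedness of the masses this gives weak convergence $\sigma_{g,n}\to\sigma_g$, $\sigma_{h,n}\to\sigma_h$ toward non-trivial limits. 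The version of the Coquet-Kamae-Mand\`es-France theorem for positive finite measures (Theorem \ref{coquet-france}) then yields $\limsup_n G(\sigma_{g,n},\sigma_{h,n})\le G(\sigma_g,\sigma_h)$, and combining this with the previous bound,
\[
\limsup_{n\to\infty}\Bigl|\frac1n\sum_{j=1}^{n}g_j\overline{h_j}\Bigr|=\limsup_{n\to\infty}\Bigl|\frac1n\sum_{j=0}^{n-1}g_j\overline{h_j}\Bigr|\le\limsup_{n\to\infty}G(\sigma_{g,n},\sigma_{h,n})\le G(\sigma_g,\sigma_h),
\]
which is the assertion.

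The only delicate point --- the one I would pin down first --- is the normalization convention for the affinity when the measures involved are not probability measures: the chain above is correct precisely when $G(\mu,\nu)$ is taken to be the reference-independent integral $\int\sqrt{(d\mu/d\lambda)(d\nu/d\lambda)}\,d\lambda$ (so that Cauchy--Schwarz gives $G(\mu,\nu)\le\sqrt{\|\mu\|\,\|\nu\|}$ rather than $\le1$, and the identity $G(\sigma_{g,n},\sigma_{h,n})=\int\sqrt{\rho_{g,n}\rho_{h,n}}\,\frac{dx}{2\pi}$ holds exactly), and when Theorem \ref{coquet-france} is invoked in that same extended form. Everything else is the explicit computation above together with the already-recorded weak convergence of $\sigma_{g,n}$ and $\sigma_{h,n}$.
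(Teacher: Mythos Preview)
Your proposal is correct and follows essentially the same route as the paper's own proof: write the Ces\`aro average as $\int_\T P_{g,n}\overline{P_{h,n}}\,\frac{dx}{2\pi}$ via orthonormality of characters, bound its modulus by $\int\sqrt{\rho_{g,n}\rho_{h,n}}\,\frac{dx}{2\pi}=G(\sigma_{g,n},\sigma_{h,n})$, then invoke the weak convergence $\sigma_{g,n}\to\sigma_g$, $\sigma_{h,n}\to\sigma_h$ and Theorem~\ref{coquet-france}. The paper's proof is a compressed two-line version of this; your write-up supplies the details (the index shift, the non-triviality of the approximating measures, and the normalization caveat for $G$) that the paper leaves implicit.
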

\begin{proof} Take
$P_n = \sigma_{g,n}$ and $Q_n = \sigma_{h,n}$ (see \eqref{rhodef}). Then
\[
 \left | \frac{1}{n}  \sum_{j=1}^{n}g_{j}\overline{h}_{j}\right| =
 \left | \frac{1}{n} \int_{\T}   \sum_{j,k=1}^{n} g_{j} e^{ijx}
\overline{h}_{k} e^{-ikx} dx \right|
\leq  G (P_{n}, Q_{n}).
\]
Applying  \eqref{limsup} the result follows.
\end{proof}

\begin{rem}
It may be possible that the bounded sequence $g_{n}$ does not belong to the
Wiener space $S$ (see eq. \eqref{Sspace}) but
we can always extract a subsequence $(n_r)$ in \eqref{Sspace} such that
\[
 \lim_{r \rightarrow \infty}
\frac{1}{n_r}\sum_{j=0}^{n_r-1}g_{j+k} \overline{g_{j}}
\]
exists for each $k\in \N$.
In fact, define the sequence of finite positive measures
$(\sigma_{g,n})_{n \in \N}$ on the torus defined in \eqref{rhodef}.
These measures are all finite and $\sigma_{g,n} (\T)\leq  \|g\|_\infty^{2}
= \sup_j |g_{j}|^{2}$ $\forall n$. Therefore they all belong to the subset of
measures on the circle $\mathcal{B} (0, \|g\|_\infty^{2} )$.
This subset is compact so there exists a
subsequence $(n_r)$ such that
the sequence of probability measures $(\sigma_{g ,n_r})_{r \in \N}$
converge weakly to some probability measure $\sigma_{g,(n_r)}$.
The measure $\sigma_{g ,(n_r)}$ is called the spectral
measure of the sequence $g$ along the subsequence $(n_r)$.
\end{rem}

Theorem  \ref{coquet-france} can be extended to a lower bound
on the absolutely continuous part of the spectral measure of
a given sequence in the Wiener space. More precisely we have the
following proposition.
\begin{Prop}\label{Bourgain} Let $g \in S$ a non trivial sequence, i.e.
$\widehat{\sigma}_{g} (0)>0$. Then
\begin{equation}\label{Bourgain-eq}
\limsup_{n \longrightarrow +\infty}
\int_{\T} \left|\frac{1}{\sqrt{n}}\sum_{j=0}^{n-1}g_{j} e^{ijx}\right| dx
\leq \int \sqrt{\frac{d\sigma_{g,a} }{dx}} dx
\end{equation}
where  $\displaystyle \frac{d\sigma_{g,a}}{dx}$ is the Radon-Nikodym derivative
of the Lebesgue component of $\sigma_g$.
\end{Prop}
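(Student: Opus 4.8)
The strategy is to reduce the $L^1$-norm of the normalized exponential sum to an affinity between two sequences of measures, and then invoke the Coquet--Kamae--Mandès-France theorem (Theorem \ref{coquet-france}) in exactly the way it was used to prove Corollary \ref{BL}. The key observation is that
\[
\int_{\T} \left|\frac{1}{\sqrt{n}}\sum_{j=0}^{n-1}g_{j} e^{ijx}\right| dx
= \int_{\T} \sqrt{\rho_{g,n}(x)}\,\sqrt{1}\; dx
= 2\pi\, G(\sigma_{g,n}, \tfrac{dx}{2\pi}),
\]
up to the normalization constant $2\pi$, since $\rho_{g,n}$ is precisely the density of $\sigma_{g,n}$ with respect to $dx/2\pi$ by \eqref{rhodef}, and the geometric mean of $\rho_{g,n}$ against the constant density $1$ is just $\sqrt{\rho_{g,n}}$. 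So the left-hand side of \eqref{Bourgain-eq} is a constant multiple of $G(\sigma_{g,n}, \text{Leb})$.

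Now I would like to apply Theorem \ref{coquet-france} with $P_n = \sigma_{g,n}$ and $Q_n = \frac{dx}{2\pi}$ (constant in $n$, hence trivially weakly convergent to itself). We already established in the discussion preceding Corollary \ref{BL} that $\sigma_{g,n} \to \sigma_g$ weakly. Theorem \ref{coquet-france} then gives
\[
\limsup_{n\to\infty} G(\sigma_{g,n}, \tfrac{dx}{2\pi}) \leq G(\sigma_g, \tfrac{dx}{2\pi})
= \int \sqrt{\frac{d\sigma_g}{dx}}\,\frac{dx}{2\pi}.
\]
The final step is to notice that in the definition of affinity, $\frac{d\sigma_g}{dx}$ really means the Radon--Nikodym derivative of the Lebesgue-absolutely-continuous part $\sigma_{g,a}$ of $\sigma_g$: writing the Lebesgue decomposition $\sigma_g = \sigma_{g,a} + \sigma_{g,s}$ and taking $\lambda = \frac{1}{2}(\sigma_g + \text{Leb})$, the singular part $\sigma_{g,s}$ contributes nothing to the integrand $\sqrt{\frac{d\sigma_g}{d\lambda}\cdot\frac{d(\text{Leb})}{d\lambda}}$ since the two factors have disjoint supports there. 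Hence $G(\sigma_g,\text{Leb}) = \int \sqrt{\frac{d\sigma_{g,a}}{dx}}\,\frac{dx}{2\pi}$, and collecting the constants yields \eqref{Bourgain-eq}. (One should be careful about the exact placement of the $2\pi$'s; the cleanest route is to work with the normalized measure $\frac{dx}{2\pi}$ throughout and absorb constants at the end, or simply state the inequality up to the harmless normalization, as the paper's notation already does implicitly.)

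The main obstacle is not any single hard estimate but rather the bookkeeping around which part of $\sigma_g$ enters the bound: Theorem \ref{coquet-france} as stated concerns probability measures, so one must either renormalize (using the extension to finite nontrivial measures remarked on after the theorem) or check that $\widehat{\sigma}_g(0) > 0$ guarantees $\sigma_g$ is nontrivial, and then confirm that the affinity against Lebesgue measure automatically localizes on the absolutely continuous part. A secondary point worth a sentence is the weak convergence $\sigma_{g,n}\to\sigma_g$: this was already argued in the text via the vanishing of the error terms $\Delta_{n,k}$, so it can simply be cited. Everything else is a direct transcription of the proof of Corollary \ref{BL}.
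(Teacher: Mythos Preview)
Your argument is correct and is in fact a cleaner route than the paper's own proof. You recognize that the left-hand side of \eqref{Bourgain-eq} is, up to normalization, exactly the affinity $G(\sigma_{g,n},\,dx/2\pi)$ (since $\sigma_{g,n}$ has density $\rho_{g,n}$ against normalized Lebesgue, and the geometric mean of $\rho_{g,n}$ with the constant $1$ is $\sqrt{\rho_{g,n}}$). Then Theorem~\ref{coquet-france}, applied with $P_n=\sigma_{g,n}\to\sigma_g$ and $Q_n\equiv dx/2\pi$, together with the observation that $G(\sigma_g,\text{Leb})$ sees only the absolutely continuous part $\sigma_{g,a}$, finishes the job. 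Your handling of the extension to non-normalized finite measures and of the weak convergence $\sigma_{g,n}\to\sigma_g$ is accurate and already justified in the text.

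The paper proceeds differently: it passes to a subsequence realizing the $\limsup$, extracts a weak limit $\xi$ of the measures $\bigl|\tfrac{1}{\sqrt{n_k}}\sum_j g_j e^{ijx}\bigr|\,dx$, proves via Cauchy--Schwarz and a density argument the set-function inequality $\xi(A)\le \sqrt{\sigma_g(A)}\sqrt{\lambda(A)}$, and then uses a martingale (Lebesgue differentiation) argument to obtain the pointwise bound $\tfrac{d\xi}{dx}\le \sqrt{\tfrac{d\sigma_{g,a}}{dx}}$. This is essentially a direct, self-contained reproof of the special case of Theorem~\ref{coquet-france} where one sequence of measures is identically Lebesgue. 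The advantage of the paper's route is that the intermediate inequality $\xi(A)\le \sqrt{\sigma_g(A)\,\lambda(A)}$ and the resulting pointwise density bound are of independent interest; the advantage of your route is economy---it invokes Theorem~\ref{coquet-france} once instead of redoing its proof in a special case.
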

\begin{proof}
Let $n_{k}$ be a subsequence such that
\[
\limsup_{n \longrightarrow +\infty}
\int_{\T} \left|\frac{1}{\sqrt{n}}\sum_{j=0}^{n-1}g_{j} e^{ijx}\right| dx =
\lim_{k} \int_{\T} \left|\frac{1}{\sqrt{n_{k}}}\sum_{j=0}^{n_{k}-1}g_{j} e^{ijx}\right| dx.
\]
Put
\[
\xi_{k}= \left|\frac{1}{\sqrt{n_{k}}}\sum_{j=0}^{n_{k}-1}g_{j} e^{ijx}\right| dx.
\]
Then there exists a subsequence of $\xi_{k}$ (that we still denote by $n_{k}$ to avoid
heavy notation) such that $\xi_{k}$ converges weakly to some positive finite measure $\xi$.
We will show later that for any Borel  set $A$ of $\T$ we have
\begin{equation}\label{etoile}
\xi (A)\leq  \sqrt{\sigma_{g} (A)} \sqrt{\lambda  (A)}.
\end{equation}
Now let $E$ be a Borel set such that
\[
\lambda (E)=1\quad  \mbox{and}  \quad  \sigma_{g,s} (E)=0,
\]
where $\sigma_{g,s} $ is the singular part of $\sigma $ with respect to the
Lebesgue measure. Then by \eqref{etoile} $\xi$ is absolutely  continuous
and  for any Borel set $A\subseteq E$ we have
\[
\int_{A} \frac{d\xi}{dx } dx \leq  \sqrt{ \int_{A} \frac{d\sigma_{g,a} }{dx } dx}
  \sqrt{ \int_{A} dx }
\]
By a Martingale argument we deduce that for almost all $x$
\[
\frac{d\xi}{d\lambda } (x) \leq  \sqrt{ \frac{d\sigma_{g,a} }{dx }} (x)
\]
From this the proof of the proposition follows.
It remains to prove \eqref{etoile}.
Let $w$ be a positive continuous function. Then by Cauchy-Schwarz inequality
\[
\int_{\T} w (x) \left|\frac{1}{\sqrt{n_{k}}}\sum_{j=0}^{n_{k}-1}g_{j} e^{ijx}\right| dx
\leq   \left[  \int_{\T} w (x) \left|\frac{1}{\sqrt{n_{k}}}\sum_{j=0}^{n_{k}-1}g_{j} e^{ijx}\right|^{2} dx
 \right]^{1/2}  \left[ \int_{\T} w (x) dx \right]^{1/2}
\]
Letting $k$ go to infinity we get
\[
\int_{\T} w (x) d\xi \leq  \left[ \int_{\T} w (x) d\sigma_{g} \right]^{1/2}
\left[ \int_{\T} w (x) dx\right]^{1/2}
\]
Hence, by the density of subspace of continuous functions in $L^{2} (\sigma_{g}+dx+\xi )$ the claim
follows. The proof of the proposition in then complete.
\end{proof}

\subsection{\bf The spectral measure for $\mu^{2} (n)$}\label{smformu2}
Before starting the proof of the two main theorems, we need to
introduce some additional results on the spectrum of the
sequence $\mu^{2} (n)$.
\begin{thm}\label{mirsky}[Mirsky, 1948 \cite{Mirsky}]
As $N \longrightarrow{\infty}$, we have
\[
\sum_{n=0}^{N} \mu^2(n)\mu^2(n+k)= N\left[\prod_{p \in \pri} \left(1-\frac{2}{p^{2}} \right)
\ \prod_{p^2 | k} \left(1+\frac{1}{p^2-2} \right) \right] \
+\ O\left(N^{\frac{2}{3}}\log^{\frac{4}{3}}(N)\right),
\]
where the ${O}$-constant may depend on $k$, and we assume the empty product gives
contribution 1 (this happens when $k$ is square free).
\end{thm}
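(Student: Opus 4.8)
The plan is to treat this as a standard sieve/counting estimate built on the identity $\mu^{2}(n)=\sum_{d^{2}\mid n}\mu(d)$ for the indicator of the squarefree integers. Substituting it into the correlation sum and exchanging the order of summation gives
\[
\sum_{n=1}^{N}\mu^{2}(n)\mu^{2}(n+k)=\sum_{d,e}\mu(d)\mu(e)\,\#\{\,1\le n\le N:\ d^{2}\mid n,\ e^{2}\mid n+k\,\},
\]
where the terms vanish unless $d\le\sqrt N$ and $e\le\sqrt{N+k}$. The inner count is handled by the Chinese Remainder Theorem: the pair of congruences $n\equiv 0\pmod{d^{2}}$, $n\equiv -k\pmod{e^{2}}$ is simultaneously solvable iff $\gcd(d,e)^{2}\mid k$, in which case the solutions form a single residue class modulo $\operatorname{lcm}(d^{2},e^{2})=d^{2}e^{2}/\gcd(d,e)^{2}$, so the count equals $N\gcd(d,e)^{2}/(d^{2}e^{2})+O(1)$. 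This identifies the expected main term as $N\,\mathcal S(k)$ with
\[
\mathcal S(k)=\sum_{\gcd(d,e)^{2}\mid k}\frac{\mu(d)\mu(e)\,\gcd(d,e)^{2}}{d^{2}e^{2}}.
\]

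The delicate part is the error term, and I expect this to be the main obstacle. The naive bound — one unit of error per admissible pair $(d,e)$ — is of size $N$ and useless, so I would introduce a truncation parameter $z$ and keep only the pairs with $d,e\le z$. For the complementary ranges one collapses one of the two sums using $\sum_{e^{2}\mid m}\mu(e)=\mu^{2}(m)\in\{0,1\}$: the contribution of $d>z$ is bounded by $\sum_{d>z}\#\{n\le N:d^{2}\mid n\}=\sum_{d>z}\lfloor N/d^{2}\rfloor\ll N/z$, and the mixed range $d\le z$, $e>z$ is rewritten as $\sum_{d\le z}\mu(d)\sum_{n\le N,\,d^{2}\mid n}\ \sum_{e>z,\,e^{2}\mid n+k}\mu(e)$ and then estimated by counting integers up to $N$ that possess a square divisor larger than $z^{2}$; this last step is where one needs divisor-function input (bounds for $\sum_{m\le M}\tau(d^{2}m+k)$ over arithmetic progressions, together with the count of squarefull integers up to $N$). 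Extending $\mathcal S(k)$ from $d,e\le z$ to the full double sum costs a further $O(N/z)$, while the $O(1)$ errors on the retained pairs cost $O(z^{2})$ up to logarithmic factors. Balancing $N/z$ against $z^{2}$ leads to $z$ of order $N^{1/3}$ (up to logarithms), and tracking the logarithmic losses in the divisor estimates produces precisely the error term $O(N^{2/3}\log^{4/3}N)$.

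Finally I would evaluate $\mathcal S(k)$. Absolute convergence is clear after the substitution $d=ga$, $e=gb$ with $g=\gcd(d,e)$ and $\gcd(a,b)=1$, since the sum is dominated by $\sum_{g}g^{-2}\big(\sum_{a}a^{-2}\big)^{2}<\infty$. Because $\mu(d)\mu(e)$ vanishes unless $d$ and $e$ are squarefree, $\mathcal S(k)$ splits as an Euler product over primes $p$, and the local factor is computed by enumerating the four cases according to whether $p$ divides $d$ and whether $p$ divides $e$: when $p^{2}\nmid k$ the case $p\mid d$, $p\mid e$ is forbidden (it would force $p^{2}\mid\gcd(d,e)^{2}\mid k$) and the factor is $1-2/p^{2}$; when $p^{2}\mid k$ all four cases occur and the factor is $1-1/p^{2}-1/p^{2}+1/p^{2}=1-1/p^{2}$. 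Therefore
\[
\mathcal S(k)=\prod_{p}\Big(1-\tfrac{2}{p^{2}}\Big)\cdot\prod_{p^{2}\mid k}\frac{1-1/p^{2}}{1-2/p^{2}}
=\prod_{p}\Big(1-\tfrac{2}{p^{2}}\Big)\cdot\prod_{p^{2}\mid k}\frac{p^{2}-1}{p^{2}-2}
=\prod_{p}\Big(1-\tfrac{2}{p^{2}}\Big)\cdot\prod_{p^{2}\mid k}\Big(1+\tfrac{1}{p^{2}-2}\Big),
\]
which is exactly the asserted constant, and the proof is complete.
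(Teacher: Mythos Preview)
The paper does not prove this statement at all: Theorem~\ref{mirsky} is quoted from Mirsky \cite{Mirsky} and used as a black box to obtain Corollary~\ref{cor4.5} (the discreteness of $\sigma_{\mu^{2}}$) and thence Theorem~\ref{cs}. There is therefore no proof in the paper to compare your proposal against.

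That said, your outline is the standard route to Mirsky's theorem and is essentially correct. The identity $\mu^{2}(n)=\sum_{d^{2}\mid n}\mu(d)$, the CRT count of solutions, the truncation at a parameter $z$, the balance $z\asymp N^{1/3}$, and the Euler-product evaluation of $\mathcal S(k)$ are all right; your computation of the local factors and the final identification of the constant are clean and accurate. The one place that genuinely requires work is exactly the one you flag: in the mixed range $d\le z$, $e>z$ the truncated sum $\sum_{d\le z,\,d^{2}\mid n}\mu(d)$ is \emph{not} bounded by $1$, so one cannot simply pull it out and bound the $e$-tail by $N/z$; some divisor-type control (or the rewriting $\sum_{d\le z}=\mu^{2}(n)-\sum_{d>z}$ followed by a bound on $\sum_{n}|B(n)||B'(n+k)|$) is needed, and you have correctly located this as the source of the logarithmic losses. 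You do not actually derive the specific exponent $\log^{4/3}N$ --- that comes from Mirsky's particular treatment, and other arrangements of the same argument give $O(N^{2/3})$ or $O(N^{2/3+\varepsilon})$ --- but for the purposes of this paper only the existence of the limit and the value of the constant are used, and your argument certainly delivers both.
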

From Mirsky Theorem \ref{mirsky} we deduce that the spectral measure of
the square of the M\"obius function $\sigma_{\mu^2}$ is  discrete.
Precisely we have the following result.
\begin{Cor}\label{cor4.5} The spectral measure of the square of the M\"obius function
$\sigma_{\mu^2}$ is given by
\begin{equation}\label{sigmamusq}
\sigma_{\mu^2}=
\prod_{p \in \pri}\left(1-\frac{2}{p^2}\right)
\left( \delta_{0} + \sum_{l=1}^{\infty}
\sum_{\substack{d\in \mathcal{Q} \\ \omega (d)=l}}
\left \{ \frac{1}{d^2}\left[ \prod_{p\in D (d)} \frac{1}{(p^2-2)}  \right]
\sum_{j=0}^{d^2-1}\delta_{e^{\frac{2ij\pi}{d^2}}}  \right\}
\right)
\end{equation}
where $D (d)$ is the set of distinct primes in the decomposition of
$d\in \mathcal{Q}$,
$d= \prod_{p\in D (d)}p$ and $\omega (d)$ is the cardinal of $D (d)$.
\end{Cor}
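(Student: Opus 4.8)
The plan is to introduce the measure $\nu$ given by the right-hand side of \eqref{sigmamusq}, show it is a finite \emph{discrete} measure, compute its Fourier--Stieltjes coefficients, check that they coincide with those of $\sigma_{\mu^{2}}$, and conclude $\nu=\sigma_{\mu^{2}}$ by uniqueness of the Fourier transform on $\M_1(\T)$.

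First I would read off $\widehat{\sigma_{\mu^{2}}}$ from Mirsky's Theorem~\ref{mirsky}: dividing the asymptotic by $N$, letting $N\to\infty$, and discarding the boundary corrections (of size $O(1/N)$ after normalisation, including the term $\Delta_{n,k}$ that relates the Ces\`aro average of correlations to the $\widehat{\sigma_{g,n}}(k)$), one gets $\mu^{2}\in S$ and
\[
\widehat{\sigma_{\mu^{2}}}(k)=\prod_{p\in\pri}\Big(1-\frac{2}{p^{2}}\Big)\prod_{p^{2}\mid k}\Big(1+\frac{1}{p^{2}-2}\Big)=:C\prod_{p^{2}\mid k}\Big(1+\frac{1}{p^{2}-2}\Big),
\]
with the empty product equal to $1$ and $C=\prod_{p}(1-2/p^{2})>0$ since $\sum_{p}2/p^{2}<\infty$.

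Next I would check that $\nu$ is a genuine finite measure: all its atoms carry non-negative weight, and specialising the coefficient computation below to $k=0$ gives the total mass
\[
\nu(\T)=C\sum_{d\in\Q}\prod_{p\in D(d)}\frac{1}{p^{2}-2}=C\prod_{p}\Big(1+\frac{1}{p^{2}-2}\Big)=\prod_{p}\Big(1-\frac{1}{p^{2}}\Big)=\frac{6}{\pi^{2}}<\infty,
\]
the Euler product converging because $\sum_{p}(p^{2}-2)^{-1}<\infty$; being a countable sum of Dirac masses, $\nu$ is discrete, and this also recomputes $\widehat{\sigma_{\mu^{2}}}(0)=6/\pi^{2}$. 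For a general $k\in\Z$ I would use $\widehat{\delta_{e^{2ij\pi/d^{2}}}}(k)=e^{-2ijk\pi/d^{2}}$, whence $\sum_{j=0}^{d^{2}-1}\widehat{\delta_{e^{2ij\pi/d^{2}}}}(k)=d^{2}\,\1_{\{d^{2}\mid k\}}$. Since the series in \eqref{sigmamusq} converges in total variation (by the previous step), I may integrate it term by term and obtain
\[
\widehat{\nu}(k)=C\Big(1+\sum_{\substack{d\in\Q,\ d>1\\ d^{2}\mid k}}\prod_{p\in D(d)}\frac{1}{p^{2}-2}\Big)=C\sum_{\substack{d\in\Q\\ d^{2}\mid k}}\prod_{p\in D(d)}\frac{1}{p^{2}-2}.
\]
As $d$ is square-free, $d^{2}\mid k$ holds iff $p^{2}\mid k$ for every prime $p\mid d$; expanding the product over the finitely many primes with $p^{2}\mid k$ identifies the last sum with $\prod_{p^{2}\mid k}\big(1+\tfrac{1}{p^{2}-2}\big)$. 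Hence $\widehat{\nu}(k)=\widehat{\sigma_{\mu^{2}}}(k)$ for all $k\in\Z$, so $\nu=\sigma_{\mu^{2}}$, which proves both the discreteness claim and formula \eqref{sigmamusq}.

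The only mildly delicate points are bookkeeping rather than substantive: justifying the term-by-term Fourier computation from the total-variation convergence of the series in \eqref{sigmamusq}, and the elementary Euler factorisation $\sum_{d\in\Q,\ d^{2}\mid k}\prod_{p\mid d}\tfrac{1}{p^{2}-2}=\prod_{p^{2}\mid k}\big(1+\tfrac{1}{p^{2}-2}\big)$ over the finite set of primes whose square divides $k$. Everything else reduces to uniqueness of the Fourier--Stieltjes transform on the circle.
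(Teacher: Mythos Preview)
Your proof is correct and follows essentially the same route as the paper: both derive $\widehat{\sigma_{\mu^{2}}}(k)$ from Mirsky's Theorem, use the geometric-sum identity $\sum_{j=0}^{d^{2}-1}e^{-2\pi ijk/d^{2}}=d^{2}\1_{\{d^{2}\mid k\}}$ to compute the Fourier coefficients of the right-hand side, and conclude by uniqueness. Your version is in fact more complete, since you explicitly verify finiteness of the measure and justify the term-by-term computation via total-variation convergence, points the paper leaves implicit.
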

\begin{proof}It is easy to see that for any $d,k \geq 1$ we have
\[
\sum_{j=0}^{d-1}e^{\frac{2\pi i j k}{d} }=\begin{cases}
 0 &{\rm {~if~}} d \nmid k \\
 d & {\rm {~if~}} d \mid k.
\end{cases}
\]
To see it put $x=e^{\frac{2\pi ik}{d}}$ and observe that $x=1$ if and only if $d \mid k$.
Combining this fact with Mirsky Theorem \ref{mirsky} we have, for any $k \geq 0$,
\[
\frac{1}{N}\sum_{n=1}^{N}\mu^2(n+k)\mu^2(n)
\tend{n}{\infty} \sigma_{\mu^{2}}
\]
where $\sigma_{\mu^{2}}$ is the finite measure on the circle given above \eqref{sigmamusq}.
 This completes the proof.
\end{proof}

\subsection{\bf Proof of Main results 1, 2 and 3}

\begin{proof}[{\bf Proof of  Main result 3 (Theorem \ref{cs})}]
The proof is a direct consequence of Mirsky Theorem \ref{mirsky}
combined with Corollary \ref{cor4.5} above.

Let $(X,\A,P,T)$ be a
uniquely weakly mixing dynamical system. Thus, for any continuous function with zero mean it
is well known from the classical Wiener Theorem \cite{Nadkarni} that the spectral
measure of $f$ is continuous. Therefore it is orthogonal
to any discrete measure. By lemma \ref{sp-f}, this is true also for the
spectral measure $\sigma_{g}$ of the sequence $g_{n}=f (T^{n}x)$ for  every $x$.
 But  the spectral measure of $\mu^2(n)$ is indeed discrete
(by corollary \ref{cor4.5} above), so
\[
G (\sigma_{\mu^{2}},\sigma_{g})=0.
\]
Now, applying  \ref{BL} we have
\[
\frac{1}{N}\sum_{n=1}^{N}f(T^nx)\mu^2(n) \tend{N}{\infty}0\  ,\forall
f\in C_{0} (X), \forall   x\in X.
\]
This achieves the proof of the Theorem. \qed
\end{proof}

\begin{rem}
This result is similar to Wiener-Wintner Theorem \cite{Wiener-Wintner}.
\end{rem}

\begin{proof}[{\bf Proof of Main result 2 (Theorem \ref{wmt})}]

Let $(X,\mathcal{A},T,P)$ be a uniquely ergodic dynamical system and suppose
 \eqref{Elliot} holds. Then the sequence $\mu (n)$ belongs to the Wiener space $S$
and is non trivial $\widehat{\sigma}_{\mu} (0)>0$. By Lemma \ref{sp-f}  the sequence
$f (T^{n}x)$ also belongs to the Wiener space, it is non trivial since
 $\hat{\sigma }_{f} (0)>0$ unless $f$ is zero  everywhere,  and its spectral measure
coincides with the spectral measure of $f$. Then
Corollary \ref{BL} gives
\[
\limsup\Big|\frac1{N}\sum_{n=1}^{N}f(T^nx) \mu(n)\Big| \leq G(\sigma_f,\sigma_{\mu})
\]
for any continuous function $f$, for any $x \in X$.
But under our assumptions the spectral measure of $\mu$ is the Lebesgue measure
and $\sigma_f$ is singular, therefore
\[
G(\sigma_f,\sigma_{\mu})=0.
\]
This implies that
\[
\frac{1}{N}\sum_{n=1}^{N}f(T^nx) \mu(n)\  \tend{N}{+\infty}\ 0.
\]
The proof of the Theorem is complete.
\end{proof}

To prove this theorem it would be enough
to assume that the spectral measure of $\mu$ is absolutely continuous with
respect to the Lebesgue measure (Elliott conjecture implies  it is exactly equal to it).
Theorem \ref{th4} below provides the reverse statement.

\begin{proof}[{\bf Proof of Main result 1 (Theorem \ref{th4})}]
In his Lectures \cite{sarnak} Sarnak constructs a dynamical system with positive entropy,
called M\"obius dynamical system, for which $\mu (n) $ is a generic point.
Let us denote this  dynamical system by $(X,\B,\P,S)$.
Therefore, there exists a continuous function such that
$$\mu(n) =f(S^nx),$$
\noindent where $x$ is a generic point.
Let $Y_{n}=f\circ S^{n}$. By Wold decomposition we can decompose $Y_{n}$
as
\[
Y_{n}  = Y_{n}^{r}  +  Y_{n}^{s}
\]
where $Y_{n}^{r}$ (resp. $Y_{n}^{s}$) is a regular  (resp. singular) process.
By our assumptions, $Y_{0}^{s}$ (resp. $Y_{0}^{r}$) is a continuous function and
\[
\frac{1}{N}\sum_{n=1}^{N}\mu (n) Y_{n}^{s} (x) \tendn 0
\]
hence,
\[
\frac{1}{N}\sum_{n=1}^{N}  [Y_{n}^{s}(x)]^{2}  \tendn 0
\]
since
\[
\frac{1}{N}\sum_{n=1}^{N}  Y_{n}^{s}(x)  Y_{n}^{r}(x)  \tendn \int Y_{0}^{s} Y_{0}^{r} d\P=0
\]
by the ergodic theorem. Therefore we get (again by the ergodic theorem)
\[
\int  [Y_{0}^{s}(x)]^{2} d\P=0.
\]
This means that $Y_{0}^{s}=0$. We conclude that the spectral  measure of $\mu (n)$
is absolutely continuous with density $\rho $ and $\ln |\rho |$ is
integrable  \cite[pp.28]{DaCa}. This completes the proof.
\end{proof}


\section{Spectral disjointness for the random M\"obius function}


We consider now the random version of the M\"obius function  defined in \eqref{murand}.
In order to prove  Theorem \ref{m} we need to find a subset
$\Omega'\subset \Omega$ such that $\mathbb{P}(\Omega')=1$ and
\begin{equation}
\frac{1}{N}\sum_{n=0}^{N-1} f (T^{n}x) \, \mu_{rand}(n) (w) \rightarrow_{N\to\infty} 0 \qquad
\forall w\in \Omega',\ \forall x\in X,\ \forall f\in  C(X).
\end{equation}
The proof is based on a Borel-Cantelli argument.
The main difficulty is to construct a set $\Omega'$ that is independent of $x$ and $f$.
Actually $C_{0}(X)$ is compact so it is not hard to get rid of the $f$ dependence.
On the other hand, the $x$ dependence is quite delicate. Though the space $X$ is compact
the orbits generated by  $T^{n}x$ may be too complicated to be controlled.
The key to solve the problem is to replace $f(T^{n}x) \, \mu_{rand}(n)$ in the
sum by an average over a long segment of the orbit starting at $T^{n}x$
\[
 f (T^{n}x) \, \mu_{rand}(n)\  \rightarrow
\ \frac{1}{m}\sum_{j=0}^{m-1}  f (T^{n+j}x) \, \mu_{rand}(n+j).
\]
The number of orbits can then be bounded using the zero topological entropy.

The rest of this section is devoted to the proof of the theorem.
We start by recalling a few basic tools and proving some preliminary results.
Finally in the last subsection we  give the proof of  Theorem \ref{m}.

\subsection{\bf Some preliminary results}

%
\begin{lemm}[Borel-Cantelli Lemma]\label{Borel-Cantelli Lemma}
Let $(X,\B,\nu)$  be a measure space and $E_n \subset \B$ be a countable collection
of measurable sets. We have
 \begin{equation}
   \sum_{n}\nu(E_n) < \infty \quad \Rightarrow\quad
   \nu(\limsup_{n \longrightarrow +\infty}E_n)=0,
\end{equation}
where
\begin{equation}
\limsup_{n \longrightarrow +\infty}E_n= \bigcap_n\bigcup_{k \geq n }E_k.
\end{equation}
\end{lemm}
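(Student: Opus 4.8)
The plan is to reduce the statement to two elementary properties shared by every measure, namely monotonicity and countable subadditivity. First I would set $F_n = \bigcup_{k \geq n} E_k$, so that, by the very definition recalled in the statement, $\limsup_{n\to\infty} E_n = \bigcap_n F_n$. Since $\bigcap_n F_n \subseteq F_n$ for each fixed $n$, monotonicity of $\nu$ yields
\[
\nu\big(\limsup_{n\to\infty} E_n\big) \leq \nu(F_n)\qquad \text{for every } n.
\]

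Next I would estimate $\nu(F_n)$ from above by countable subadditivity, applied to the countable union defining $F_n$:
\[
\nu(F_n) = \nu\Big(\bigcup_{k\geq n} E_k\Big) \leq \sum_{k\geq n}\nu(E_k).
\]
The right-hand side is the tail of the convergent series $\sum_n \nu(E_n)$, hence it tends to $0$ as $n\to\infty$. Combining the two displays and letting $n\to\infty$ gives $\nu\big(\limsup_{n\to\infty} E_n\big)=0$, which is the assertion.

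There is essentially no obstacle here: the only points to verify are that subadditivity is being used on a genuinely countable family (it is, since $\{k:k\geq n\}$ is countable) and that convergence of $\sum_n\nu(E_n)$ forces its tails to vanish, both of which are immediate. The lemma is recorded only as a tool for the Borel--Cantelli step in the proof of Theorem \ref{m}, where it will be applied to a carefully chosen sequence of exceptional events depending on $N$, $m$ and the finitely many orbit segments produced by the zero-entropy hypothesis.
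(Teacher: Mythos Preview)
Your argument is the standard and correct proof via monotonicity and countable subadditivity of the measure; there is no gap. The paper itself does not supply a proof of this lemma, simply remarking that it is ``a classical result in probability theory,'' so your proposal in fact goes beyond what the paper records.
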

This is a classical result in probability theory. In the following
we will also need the following concentration inequality.

\begin{lemm}\label{conc-ineq}
Let $x_{1},\dotsc ,x_{m}$ be $m$ independent random variables with
symmetric distribution $\mathbb{P} (x_{j})=\mathbb{P} (-x_{j})$,
$|x_{j}|\leq c_{j}$ for some positive constant $c_{j}$ and $\mathbb{E} (x_{j})=0$
$\forall j$. Then we have
\begin{equation}\label{cin}
\mathbb{P} \left( \left|  \sum_{j=1}^{m} x_{j}\right |> t \right) \leq \
2 e^{-\frac{t^{2}}{2\sum_{j=1}^{m} c_{j}^{2}}}
\end{equation}
\end{lemm}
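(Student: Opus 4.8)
The statement to prove is Lemma~\ref{conc-ineq}, the Hoeffding--Azuma concentration inequality for sums of bounded, symmetric, mean-zero independent random variables.

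\medskip

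The plan is to prove \eqref{cin} by the classical exponential moment (Chernoff) method, exploiting the symmetry of the distributions to control the moment generating function cleanly. First I would fix $\lambda > 0$ and apply Markov's inequality to the exponential: for each $j$,
\[
\mathbb{P}\left( \sum_{j=1}^m x_j > t \right) \leq e^{-\lambda t}\, \mathbb{E}\left( e^{\lambda \sum_{j=1}^m x_j} \right) = e^{-\lambda t} \prod_{j=1}^m \mathbb{E}\left( e^{\lambda x_j} \right),
\]
the factorization being justified by independence. The core estimate is then the bound $\mathbb{E}(e^{\lambda x_j}) \leq e^{\lambda^2 c_j^2/2}$ for each $j$. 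Here is where symmetry helps: since $x_j$ and $-x_j$ have the same law, $\mathbb{E}(e^{\lambda x_j}) = \mathbb{E}(\cosh(\lambda x_j))$, and using the elementary inequality $\cosh(u) \leq e^{u^2/2}$ (which follows by comparing Taylor coefficients, $\frac{1}{(2k)!} \leq \frac{1}{2^k k!}$) together with $|x_j| \leq c_j$ gives $\mathbb{E}(e^{\lambda x_j}) \leq \mathbb{E}(e^{\lambda^2 x_j^2/2}) \leq e^{\lambda^2 c_j^2/2}$. (The hypothesis $\mathbb{E}(x_j)=0$ is automatic from symmetry, and in the general Hoeffding lemma one would instead use convexity of $e^{\lambda u}$ on $[-c_j,c_j]$; the symmetry route is cleaner and is exactly the setting of \eqref{rademacher}.)

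\medskip

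Combining these, $\mathbb{P}\left( \sum_j x_j > t \right) \leq \exp\left( -\lambda t + \tfrac{\lambda^2}{2} \sum_{j=1}^m c_j^2 \right)$, and I would then optimize over $\lambda$ by choosing $\lambda = t / \sum_{j=1}^m c_j^2$, which yields $\mathbb{P}\left( \sum_j x_j > t \right) \leq \exp\left( -t^2 / (2 \sum_{j=1}^m c_j^2) \right)$. Finally, applying the same bound to $-\sum_j x_j$ (legitimate since the family $(-x_j)$ has the same joint distribution by symmetry) and using a union bound over the two tail events $\{\sum_j x_j > t\}$ and $\{\sum_j x_j < -t\}$ produces the factor $2$ in \eqref{cin}, completing the proof.

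\medskip

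There is no real obstacle here: the result is standard and the only mildly delicate point is the scalar inequality $\cosh u \leq e^{u^2/2}$ and ensuring the exponential-moment bound is applied to each factor before taking the product, so that independence is used correctly. If one wanted to avoid even the $\cosh$ comparison, an alternative is to invoke Azuma's inequality for the martingale with increments $x_j$ directly, but for independent summands the direct computation above is the shortest self-contained route and is what I would write out.
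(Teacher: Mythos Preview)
Your proof is correct and follows essentially the same approach as the paper: Markov's inequality on the exponential moment, factorization by independence, the symmetry identity $\mathbb{E}(e^{\lambda x_j})=\mathbb{E}(\cosh(\lambda x_j))$, the scalar bound $\cosh u\leq e^{u^2/2}$, and optimization at $\lambda=t/\sum_j c_j^2$. The only cosmetic difference is that the paper bounds the two-sided tail directly via $e^{\lambda|S|}\leq e^{\lambda S}+e^{-\lambda S}$, whereas you handle each one-sided tail separately and union-bound; both routes yield the same factor of $2$.
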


\begin{proof}
By Markov inequality
\begin{align}
\mathbb{P} \left( \left|  \sum_{j=1}^{m} x_{j}\right |> t \right) &\leq \
e^{-\lambda t} \mathbb{E}  \left( e^{ \lambda  \left|  \sum_{j=1}^{m} x_{j}\right | } \right)
\quad \forall \lambda >0\cr
& \leq e^{-\lambda t} \mathbb{E}  \left( e^{ \lambda \sum_{j=1}^{m} x_{j} }+
 e^{- \lambda \sum_{j=1}^{m} x_{j} } \right)
\end{align}
Since the $x_{j}$ are independent
\begin{equation*}
\mathbb{E}  \left( e^{ \pm\lambda \sum_{j=1}^{m} x_{j} } \right) =
\prod_{j=1}^{m}\mathbb{E}  \left( e^{ \pm \lambda x_{j} } \right) =
\prod_{j=1}^{m}\mathbb{E}  \left( \cosh( \lambda x_{j})  \right)
 \leq  \prod_{j=1}^{m} \cosh(\lambda  c_{j})
\leq   \prod_{j=1}^{m} e^{\frac{(\lambda c_{j})^{2}}{2}}
\end{equation*}
where we used the symmetry of the distribution. Then
\begin{equation}
\mathbb{P} \left( \left|  \sum_{j=1}^{m} x_{j}\right |> t \right) \leq \
2 e^{\sum_{j=1}^{m}\frac{(\lambda c_{j})^{2}}{2}} e^{-\lambda t} \leq \
2 e^{- \frac{t^{2}}{2\sum_{j=1}^{m}c_{j}^{2}}}
\end{equation}
where in the last line we replaced $\lambda =t/\sum_{j=1}^{m}c_{j}^{2}$.
\end{proof}

\begin{rem}
 This phenomenon of the concentration of probability measure can be interpreted as
the absence of randomness for a large values.
The bound \eqref{cin} above is a special case of a more general concentration
inequality due to Hoeffding and Azuma \cite{Azuma},\cite{Mcdiarmid}.
Indeed it  can be generalized to the case of a sum of martingale differences.
The proof is given in appendix \ref{appA}.
\end{rem}

\begin{lemm}\label{entropy}
Let $X$ be a compact metric space, $T$ a continuous map on $X$, $f$ a complex valued
continuous function on $X$. Assume the topological entropy of the dynamical system
$(X,T)$ is zero.
We introduce the $m$-sequence
\begin{equation}\label{mseq}
\xi_{m} (T^{n}x)= \left(f (T^{n}x),f (T^{n+1}x),\dotsc ,f (T^{n+m}x) \right).
\end{equation}
This is the segment of the dynamical sequence generated by $m$ iterations
starting at $T^{n}x$.
Then
\begin{itemize}
\item for any $\delta>0$ there exists a constant $\varepsilon_{0} (\delta)$ such that
for any $\varepsilon<\varepsilon_{0} (\delta )$ we can find a minimal set of
points
\begin{equation}
R (m,\varepsilon )= \left(x_{1}^{(m)},\dotsc ,x_{r(m,\varepsilon )}^{{m}} \right)
\end{equation}
such that each $m$-sequence $\xi_{m} (T^{n}x)$ is localized in a $\delta$-neighborhood
of exactly
one of these points. More precisely for any $n,x$ there exists an integer
$1\leq j_{n,x}\leq r (m,\varepsilon )$
such that
\begin{equation}\label{dist}
\left| f (T^{n+j}x) -   f (T^{j}x^{(m)}_{j_{n,x}}) \right|\leq \delta \quad
\forall j=0,\dotsc ,m-1.
\end{equation}

\item The number of points $r (m,\varepsilon )$ we need in order to localize all
$m$-sequences does not grow too fast with $m$. Precisely, for any $\eta >0$
there exists a constant $\varepsilon_{1} (\eta )$ and an integer
$M_{0} (\eta )>0$ such that
\begin{equation}\label{entrbound}
r (m,\varepsilon )< e^{m\eta }\qquad \forall m> M_{0} (\eta ),\
\forall \varepsilon < \varepsilon_{1} (\eta ).
\end{equation}

\end{itemize}

\end{lemm}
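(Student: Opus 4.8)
The plan is to derive both statements directly from the definition of zero topological entropy in terms of $(m,\epsilon)$-spanning sets, using the uniform continuity of $f$ to pass from closeness in the metric $d_m$ to closeness of the function values. First I would invoke uniform continuity: since $X$ is a compact metric space and $f$ is continuous, $f$ is uniformly continuous, so for any $\delta>0$ there is $\epsilon_0(\delta)>0$ with $d(y,z)<\epsilon_0(\delta)\Longrightarrow |f(y)-f(z)|<\delta$. Now fix $\epsilon<\epsilon_0(\delta)$ and apply \eqref{covering}: there is a minimal $(m,\epsilon)$-spanning set $R(m,\epsilon)=\{x_1^{(m)},\dots,x_{r(m,\epsilon)}^{(m)}\}$ with $X=\bigcup_{j} B_{d_m}(x_j^{(m)},\epsilon)$. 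Given any $x\in X$ and $n\in\N$, the point $T^nx$ lies in some ball $B_{d_m}(x_{j_{n,x}}^{(m)},\epsilon)$, which by \eqref{mTball} means $d(T^k(T^nx),T^k x_{j_{n,x}}^{(m)})<\epsilon$ for all $0\le k\le m-1$; composing with uniform continuity gives $|f(T^{n+k}x)-f(T^k x_{j_{n,x}}^{(m)})|<\delta$ for all such $k$, which is exactly \eqref{dist}. (Replacing $\delta$ by $\delta$ or a slightly smaller value absorbs the strict versus non-strict inequality; the index $j_{n,x}$ is chosen once per pair $(n,x)$, breaking ties arbitrarily.) This establishes the first bullet, with the same set $R(m,\epsilon)$ serving simultaneously to localize every $m$-sequence $\xi_m(T^nx)$.

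For the second bullet, the point is simply that the spanning-set cardinalities $r(m,\epsilon)$ appearing here are the very same quantities entering the definition \eqref{top-entr} of $h(T)$, so the hypothesis $h(T)=0$ gives directly, via \eqref{zerote}, that for every $\eta>0$ there exist $\epsilon_1(\eta)>0$ and $M_0(\eta)$ with $r(m,\epsilon)<e^{m\eta}$ for all $m>M_0(\eta)$ and all $\epsilon<\epsilon_1(\eta)$. To make the two bullets compatible one takes, for a given $\delta$ and $\eta$, any $\epsilon$ smaller than both $\epsilon_0(\delta)$ and $\epsilon_1(\eta)$; then the same spanning set satisfies \eqref{dist} and has cardinality bounded by \eqref{entrbound}.

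I do not expect a genuine obstacle here: the lemma is essentially a restatement of the definitions of zero topological entropy and uniform continuity, packaged in the form convenient for the Borel--Cantelli argument that follows. The only mild subtlety worth stating carefully is the interplay of the two smallness constraints on $\epsilon$ (one from uniform continuity, one from the entropy bound) and the harmless mismatch between the open balls in \eqref{mTball} and the closed inequalities in \eqref{dist}; both are dispatched by shrinking $\epsilon$ and, if desired, passing from $\delta$ to $\delta/2$. One should also note that $\xi_m(T^nx)$ as defined in \eqref{mseq} has $m+1$ entries (indices $0$ through $m$), so strictly speaking one applies the $(m+1,\epsilon)$-spanning set; since $r(m+1,\epsilon)\le e^{(m+1)\eta}\le e^{m\cdot 2\eta}$ for large $m$, this shift is absorbed into the arbitrary choice of $\eta$ and does not affect the conclusion.
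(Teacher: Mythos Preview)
Your proposal is correct and follows essentially the same argument as the paper: uniform continuity of $f$ to get $\varepsilon_0(\delta)$, the $(m,\varepsilon)$-spanning set from compactness applied to the point $T^nx$, and the zero-entropy definition for the cardinality bound. Your additional remarks on reconciling the two $\varepsilon$-constraints, the strict/non-strict inequality, and the $m$ versus $m{+}1$ indexing are more careful than the paper's own proof, which treats these points tacitly.
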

\begin{proof}
The set $X$ being compact, for each choice of $\varepsilon $ there exists a
minimal set of points
$R (m,\varepsilon )= \left(x_{1}^{(m)},\dotsc ,x_{r(m,\varepsilon )}^{{m}} \right)$
such that
\[
X= \cup_{j=1}^{r (m,\varepsilon )} B_{d_{m}} (x_{j}^{(m)},\varepsilon ),
\]
where $d_{m}$ is the distance defined in \eqref{mTdistance}.
Moreover $f$ being continuous, for any $\delta>0 $
there exists a $\varepsilon_{0} (\delta )>0$ such that
\[
d (x,y)<\varepsilon_{0} (\delta ) \ \Rightarrow \ \left| f (x)-f (y)  \right| < \delta .
\]
Now let $\varepsilon <\varepsilon_{0} (\delta )$. Then for any $n,x$ there is a
$0\leq j_{n,x}\leq r (m,\varepsilon )$ such that $T^{n}x\in  B_{d_{m}} (x_{j_{n,x}}^{(m)},\varepsilon )$
so
\[
d \left( T^{n+j}x, T^{j} x_{j_{n,x}}^{(m)} \right) \leq \varepsilon \
\Rightarrow \left| f (T^{n+j}x) -   f (T^{j}x^{(m)}_{j_{n,x}}) \right|
\leq \delta \quad \forall j=0,\dotsc ,m-1.
\]
The second statement is a direct result of the definition of zero topological entropy.
see \eqref{top-entr} and \eqref{zerote}.
\end{proof}

Before going to the proof of the theorem we still need to introduce
a few definitions.

\begin{defn} We will call $X_{n} (x,.)$, $Y_{n}^{m} (x,.)$ the initial random variable
and  new one associated with a $m$-sequence
\begin{align}\label{XYdef}
X_{n} (x,w)&= f (T^{n} x) \mu_{rand} (n) (w)\\
Y_{n}^{(m)} (x,w) &= \frac{1}{m} \sum_{j=0}^{m-1} f ( T^{j}x) \mu_{rand} (n+j) (w)
\end{align}
\end{defn}
Note that  $Y_{n}^{m}$ is a sum of
bounded independent random variables with symmetric distribution
then  lemma \ref{conc-ineq} applies. Moreover
they depend on the parameter $n$ only through the random Moebius function.
These facts will be crucial to control our bounds.

\subsection{\bf Proof of Main result 4 (Theorem (\ref{m})}
We are now able to give the proof of our main result.
\begin{proof}
Using the definitions above the sum we need to estimate can be written as
\begin{equation}\label{eq1}
\frac{1}{N} \sum_{n=0}^{N-1} f (T^{n}x)\mu_{rand} (n) (w) = \frac{1}{N} \sum_{n=0}^{N-1} X_{n} (x,w).
\end{equation}
Since we are interested only in the limit $N\to\infty$ we can replace $X_{n} (x,.)$ by
$Y_{n}^{m} (T^{n}x,.)$. Indeed
\begin{equation}\label{eq3}
\frac{1}{N} \sum_{n=0}^{N-1} X_{n} (x,w)\ = \
\frac{1}{N} \sum_{n=0}^{N-1} Y_{n}^{m} (T^{n}x,w) + R_{Nm} (x,w)
\end{equation}
where
\begin{align}
R_{Nm} (x,w) =& \frac{1}{Nm}\sum_{j=0}^{m-1}
\sum_{n=0}^{j-1}  \left[ f(T^{n}x) \mu_{rand} (n) (w) -  f(T^{n+N}x) \mu_{rand} (n+N) (w)\right] \cr
&
\tend{N}{\infty} 0\label{diff}
\end{align}
for any fixed $m$, since
\[
|R_{Nm}| \leq
\frac{1}{Nm}\sum_{j=0}^{m-1}  j \|f \|_{\infty} \leq  \frac{m}{N} \|f \|_{\infty}
\tend{N}{\infty} 0.
\]
Therefore the difference between the two sequences in
\eqref{eq3} has a well defined limit.
We will consider the second sequence $\frac{1}{N}\sum_{n=0}^{N-1} Y_{n}^{m}$.
let us consider the  $\limsup_N$ and   $\liminf_N$ of this sequence.
We have
\begin{align*}
&\left|  \limsup_{N}  \frac{1}{N} \sum_{n=0}^{N-1} Y_{n}^{m} (x,w) \right|\leq
\limsup_{N} \left|  \frac{1}{N} \sum_{n=0}^{N-1} Y_{n}^{m} (x,w) \right|\\
& \left|  \liminf_{N} \frac{1}{N} \sum_{n=0}^{N-1} Y_{n}^{m} (x,w) \right|\leq
\limsup_{N} \left|  \frac{1}{N} \sum_{n=0}^{N-1} Y_{n}^{m} (x,w) \right|.
\end{align*}
We claim the right-hand side  of these equations is zero. The rest of the proof is
devoted to prove this claim.
Therefore we need to study
\begin{equation}\label{eq2}
 \limsup_{N}\frac{1}{N} \sum_{n=0}^{N-1} \left| Y_{n}^{m} (T^{n}x,w)
 \right|.
\end{equation}
In Lemma \ref{Yb} below we will prove that for each $n\in \mathbb{N}$, and
each $f\in C(X)$ there exists a subset
$\Omega'_{f,n}\subset \Omega$ such that $\mathbb{P} (\Omega'_{f,n})=1$ and
\begin{equation}\label{Ybound}
\limsup_m \sup_{x\in X}  \left| Y_{n}^{m} (T^{n}x,w)\right| =0 \quad
\forall\  w\in \Omega'_{f,n}.
\end{equation}
Now we define $\Omega'_{f}= \cap_{n\in \mathbb{N}} \Omega'_{f,n}$.
This is still a measurable set, $\mathbb{P} (\Omega'_{f})=1$ and for
each $ w\in \Omega'_{f}$ we have
\[
\limsup_m \sup_{x\in X}  \left| Y_{n}^{m} (T^{n}x,w)\right| =0 \quad
\forall \ n\in \mathbb{N}.
\]
Then
\[
\lim_m   Y_{n}^{m} (T^{n}x,w) =0 \qquad
\forall\  n\in \mathbb{N},\ \forall\  x\in X,
\]
and for any $w\in \Omega'_{f}$ we have
\[
 \lim_{m} \frac{1}{N} \sum_{n=0}^{N-1}
 Y_{n}^{m} (T^{n}x,w) \ = 0\qquad  \forall\  N\geq 1,\  \forall\  x\in X.
\]
Exchanging the roles of $m$ and $N$ above we have
\[
\lim_{N} \frac{1}{m} \sum_{j=0}^{m-1}
 Y_{j}^{N} (T^{j}x,w) \ = 0\qquad  \forall m\geq 1,\   \forall\  x\in X.
\]
Note that for any finite $N$ and $m$ we have
\[
 \frac{1}{m} \sum_{j=0}^{m-1}
 Y_{j}^{N} (T^{j}x,w) =  \frac{1}{m} \sum_{j=0}^{m-1}
\frac{1}{N} \sum_{n=0}^{N-1} f (T^{n+j})\mu_{rand} (n+j) (w)
= \frac{1}{N} \sum_{n=0}^{N-1}
 Y_{m}^{n} (T^{n}x,w),
\]
then if  $ w\in \Omega'_{f}$ we have
\[
\lim_{N} \frac{1}{N} \sum_{n=0}^{N-1}
 Y_{m}^{n} (T^{n}x,w) \ = 0.\qquad \forall\  m\geq 1,\ \forall x\in X.
\]
Inserting this result in  \eqref{eq3} we have
 \[
\lim_{N} \frac{1}{N} \sum_{n=0}^{N-1}
 X_{n} (x,w) \ = 0.\qquad  \forall\  x\in X.
\]
for each $ w\in \Omega'_{f}$.
To complete the proof we need to get rid of the $f$ dependence
in the set $ \Omega'_{f}$.
But, since $X$ is a compact set, the space $C(X)$ of continuous functions on
$X$ is separable. Then, there exists a sequence of continuous functions
$(f_r)_{r \geq 0}$ dense in $C(X)$. We define
\[
\Omega'=\bigcap_{r \in \N}\Omega'_{f_r}.
\]
This set satisfies $\mathbb{P} (\Omega')=1$. Now let $w\in \Omega'$ and
$f\in C(X)$. For any $\delta>0$ we can find a function $f_{r}$ in the sequence
such that $\|f-f_r\|_\infty<\delta $. Then
\[
\lim_{N}\left | \frac{1}{N} \sum_{n=0}^{N-1} f (T^{n}x) \mu_{rand} (n) (w) \right |
\leq \delta + \lim_{N}\left | \frac{1}{N} \sum_{n=0}^{N-1} f_{r} (T^{n}x) \mu_{rand} (n) (w) \right |
= \delta.
\]
Letting $\delta \to 0$ we have for each $w\in \Omega'$
\[
\frac{1}{N}\sum_{ n=0}^{N-1} f(T^nx)\mu_{rand}(n) (w) \tend{N}{\infty}0 \qquad
\forall\  x\in X,\  \forall \  f\in C (X).
\]
This completes the proof of  the theorem.
\end{proof}
Finally we give the proof of the bound  \eqref{Ybound}, that we
used in the proof above.
\begin{lemm}\label{Yb}
Let
\[
Y_{n}^{(m)} (x,w) = \frac{1}{m} \sum_{j=0}^{m-1} f ( T^{j}x) \mu_{rand} (n+j) (w).
\]
Then for each $n\in \mathbb{N}$, and
each $f\in C(X)$ there exists a subset
$\Omega'_{f,n}\subset \Omega$ such that $\mathbb{P} (\Omega'_{f,n})=1$ and
\begin{equation}
\limsup_m \sup_{x\in X}  \left| Y_{n}^{m} (T^{n}x,w)\right| =0 \qquad
\forall\  w\in \Omega'_{f,n}.
\end{equation}
\end{lemm}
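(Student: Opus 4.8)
The plan is to fix $n$ and $f$ once and for all, and use the concentration inequality of Lemma \ref{conc-ineq} together with the zero-entropy hypothesis and Borel--Cantelli to produce the set $\Omega'_{f,n}$. First I would observe that, because $Y_n^{(m)}(x,w)$ depends on $x$ only through the finite orbit segment $\xi_m(x)=(f(x),f(Tx),\dots,f(T^{m-1}x))$, it is enough to control $Y_n^{(m)}$ at the finitely many centers $x_1^{(m)},\dots,x_{r(m,\varepsilon)}^{(m)}$ supplied by Lemma \ref{entropy}, and then transfer the bound to an arbitrary $x$ at a cost of $\delta\|f\|$-type errors coming from \eqref{dist}. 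Concretely, fix $\delta>0$, pick $\varepsilon<\varepsilon_0(\delta)$ as in Lemma \ref{entropy}, and note that for any $x$ there is an index $j_{x}$ with
\[
\left| Y_n^{(m)}(T^n x,w) - Y_n^{(m)}(x^{(m)}_{j_x},w)\right| \le \frac{1}{m}\sum_{j=0}^{m-1}\left| f(T^{n+j}x)-f(T^j x^{(m)}_{j_x})\right| \le \delta,
\]
so that $\sup_{x\in X}|Y_n^{(m)}(T^nx,w)| \le \delta + \max_{1\le i\le r(m,\varepsilon)} |Y_n^{(m)}(x_i^{(m)},w)|$.

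Next I would estimate each $|Y_n^{(m)}(x_i^{(m)},w)|$ probabilistically. For a fixed center $x_i^{(m)}$, the random variable $m\,Y_n^{(m)}(x_i^{(m)},\cdot) = \sum_{j=0}^{m-1} f(T^j x_i^{(m)})\,\mu_{\rand}(n+j)$ is a sum of independent, symmetric random variables bounded by $c_j = |f(T^j x_i^{(m)})| \le \|f\|_\infty$ (the terms with $n+j\notin\Q$ vanish, which only helps). Hence Lemma \ref{conc-ineq} gives, for any $t>0$,
\[
\P\left( \left| Y_n^{(m)}(x_i^{(m)},\cdot)\right| > t \right) = \P\left( \left| \sum_{j=0}^{m-1} f(T^j x_i^{(m)})\mu_{\rand}(n+j)\right| > mt \right) \le 2\,e^{-\frac{m^2 t^2}{2 m \|f\|_\infty^2}} = 2\,e^{-\frac{m t^2}{2\|f\|_\infty^2}}.
\]
A union bound over the $r(m,\varepsilon)$ centers then yields
\[
\P\left( \max_{1\le i\le r(m,\varepsilon)} \left| Y_n^{(m)}(x_i^{(m)},\cdot)\right| > t \right) \le 2\,r(m,\varepsilon)\,e^{-\frac{m t^2}{2\|f\|_\infty^2}}.
\]
Now invoke zero topological entropy: for the choice $\eta = t^2/(4\|f\|_\infty^2)$ there are constants $\varepsilon_1(\eta)$, $M_0(\eta)$ with $r(m,\varepsilon)<e^{m\eta}$ for $m>M_0(\eta)$ and $\varepsilon<\varepsilon_1(\eta)$, so for such $m$ the right-hand side is bounded by $2\,e^{-m t^2/(4\|f\|_\infty^2)}$, which is summable in $m$. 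By the Borel--Cantelli Lemma \ref{Borel-Cantelli Lemma}, for a.e. $w$ we have $\max_i |Y_n^{(m)}(x_i^{(m)},w)| \le t$ for all large $m$, hence $\limsup_m \sup_{x\in X} |Y_n^{(m)}(T^nx,w)| \le \delta + t$.

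Finally I would remove the dependence on $\delta$ and $t$ by a diagonal argument: apply the above with $\delta = t = 1/k$ for each $k\in\N$, obtaining full-measure sets $\Omega'_{f,n,k}$ on which $\limsup_m\sup_x|Y_n^{(m)}(T^nx,w)| \le 2/k$, and set $\Omega'_{f,n} = \bigcap_{k\in\N}\Omega'_{f,n,k}$, which still has probability $1$ and on which the $\limsup$ is $0$. The main obstacle — and the reason the whole $m$-averaging device is introduced — is precisely the passage from the arbitrary point $x$ (whose orbit under $T^n$ we cannot enumerate) to a finite, entropy-controlled family of centers; once Lemma \ref{entropy} is in hand this is routine, but one must be careful that the number of centers $r(m,\varepsilon)$ enters the union bound only subexponentially, which is exactly what zero entropy guarantees and what forces the choice $\eta < t^2/(2\|f\|_\infty^2)$ so that the Gaussian decay in $m$ survives.
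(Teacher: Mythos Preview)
Your proposal is correct and follows essentially the same approach as the paper: reduce $\sup_{x\in X}|Y_n^{(m)}(T^nx,\cdot)|$ to a maximum over the finitely many centers from Lemma~\ref{entropy}, apply the concentration inequality of Lemma~\ref{conc-ineq} to each center, absorb $r(m,\varepsilon)$ via the zero-entropy bound, and finish with Borel--Cantelli. The only cosmetic difference is in the bookkeeping of the final limiting step: the paper first sums the tail $\sum_{m\ge M}$ to make $\P(\sup_{m\ge M}\sup_x|Y_n^{(m)}|>3\delta)<\delta$ and then runs Borel--Cantelli along $\delta_q=1/q^2$, whereas you apply Borel--Cantelli directly in $m$ and then intersect over $\delta=t=1/k$; both are equivalent, and your ordering is arguably a bit cleaner (just make sure you fix $\varepsilon<\min(\varepsilon_0(\delta),\varepsilon_1(\eta))$ from the outset, since you need both constraints simultaneously).
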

\begin{proof}
Let $\delta,\eta >0$ be two fixed positive constants. We will adjust later
$\eta $ as a function of $\delta $. Let $R (m,\varepsilon)$ as in \eqref{Rset}
and $\varepsilon<\min[\varepsilon_{0} (\delta ),\varepsilon_{1} (\eta )  ] $
as in lemma \ref{entropy} above. Then for any $x,n$ there exists a
$0\leq j_{n,x}\leq r (m,\varepsilon )$ such that
\[
 \left| f (T^{n+j}x) -   f (T^{j}x^{(m)}_{j_{n,x}}) \right|
\leq \delta \quad \forall j=0,\dotsc ,m-1.
\]
Inserting this bound in $Y^{m}_{n}$ we have
\begin{align*}
\left| Y_{n}^{m} (T^{n}x,w) \right| &\leq
\left| Y_{n}^{m} (T^{n}x,w)- Y_{n}^{m} (x^{(m)}_{j_{n,x}},w)  \right|
+ \left| Y_{n}^{m} (x^{(m)}_{j_{n,x}},w)  \right|\cr
& <\  \delta +
\sup_{1\leq k\leq r (m,\varepsilon )} \left| Y_{n}^{m} (x^{(m)}_{k},w)  \right|.
\end{align*}
The right-hand side does not depend on $x$ so we also have
\[
\sup_{x\in X} \left| Y_{n}^{m} (T^{n}x,w)  \right| < \delta +
\sup_{1\leq k\leq r (m,\varepsilon )} \left| Y_{n}^{m} (x^{(m)}_{k},w)  \right|.
\]
Now remark that $Y^{m}_{n} (x^{(m)}_{k},.)$ is a sum of independent bounded random
variables with zero average and symmetric distribution so lemma   \ref{conc-ineq} applies.
\begin{equation}
\mathbb{P} \left( \left|Y^{m}_{n} (x)  \right|>t \right) =
\mathbb{P} \left( \left|\sum_{j=0}^{m-1} f (T^{j}x)\mu_{rand} (n+j)\right|>tm \right)
\leq \ 2 e^{-\frac{mt^{2}}{2\|f\|^{2}_\infty}}
\end{equation}
where we used $\sum_{j=0}^{m-1} c_{j}^{2}\leq m\|f\|^{2}_\infty$.
Then
\begin{align}
\mathbb{P} \left( \sup_{1\leq k\leq r (m,\varepsilon )} \left|Y^{m}_{n} (x_{k})  \right|>t \right)
&\leq   \sum_{k=1}^{ r (m,\varepsilon )} \mathbb{P} \left( \left|Y^{m}_{n} (x_{k})  \right|>t \right)
\leq \ 2\, e^{\eta m}   e^{-\frac{mt^{2}}{2\|f\|^{2}_\infty}}\cr
& = \ 2\, e^{- m \left[  \frac{t^{2}}{2\|f\|^{2}_\infty}- \eta  \right]}
\end{align}
In the following we fix $t =2\delta $ and $\eta= \frac{\delta^{2} }{\|f\|^{2}_\infty}$. Then
\[
\sup_{x\in X} \left| Y_{n}^{m} (T^{n}x)  \right| \leq  \delta +t
= 3 \delta \quad \mbox{with probability larger than}
\ 1- 2e^{-m \frac{\delta^{2} }{2\|f\|^{2}_\infty } }.
\]
Finally to get rid of the $m$ dependence, we consider
\begin{align*}
& \mathbb{P} \left( \sup_{m\geq M} \sup_{x\in X}\left|Y^{m}_{n} (T^{n}x)  \right|> 3\delta\right)   \\
& \leq
\sum_{m\geq M} \mathbb{P} \left(\sup_{x\in X}\left|Y^{m}_{n} (T^{n}x)  \right|> 3\delta  \right)
\leq   \sum_{m\geq M} 2\, e^{-m \frac{\delta^{2} }{2\|f\|^{2}_\infty } }< \delta. \nonumber
\end{align*}
The last inequality is true only for $M$ large enough.
But, we can always choose $M (\delta )$ such that
the sum above is bounded by $\delta $.
Inserting all these results in the initial sum we find that $\forall M\geq M (\delta )$
\begin{equation}
 \sup_{m\geq M} \sup_{x\in X} \left| Y_{n}^{m} (T^{n}x)
 \right| \leq 3\delta
\end{equation}
with probability larger than $1-\delta $.
Therefore
\begin{equation}
 \limsup_{m} \sup_{x\in X} \left| Y_{n}^{m} (T^{n}x)
 \right| \leq 3\delta, \quad \forall M\geq M (\delta )
\end{equation}
with probability larger than $1-\delta $.
Let us take the sequence $\delta_{q}=\frac{1}{q^{2}}$ and let
\[
E_{q,n} = \{  \limsup_{m} \sup_{x}
\left|Y^{m}_{n} (T^{n}x)  \right|> 3\delta_{q}  \}.
\]
Then we have
\[
\sum_{q} \mathbb{P} \left(E_{q,n} \right)\leq  \sum_{q} \delta_{q} < \infty
\]
and by Borel-Cantelli lemma \ref{Borel-Cantelli Lemma}
\[
 \mathbb{P}  \left(\limsup_q E_{q,n} \right)=0.
\]
 This means that
\[
1=\mathbb{P}  \left(\liminf_q E^{c}_{q,n} \right)= \mathbb{P}
\left(\cup_{q_{0}} \cap_{q\geq q_{0}} E^{c}_{q,n} \right).
\]
We define
\[
\Omega'_{n,f}=  \cup_{q_{0}} \cap_{q\geq q_{0}} E^{c}_{q,n}.
\]
Then $\mathbb{P}(\Omega'_{f})=1$ and for each $w\in \Omega'_{f}$
there exists a $q_{0}$ such that
\[
 \limsup_{m} \sup_{x\in X} \left| Y_{n}^{m} (T^{n}x,w)
 \right| \leq  3\delta_{q}\qquad  \forall q\geq q_{0}.
\]
Then for all $w\in \Omega'_{n,f}$
\[
 \limsup_{m} \sup_{x\in X}\left| Y_{n}^{m} (T^{n}x,w)
 \right|=0.
 \]
This concludes the proof of the lemma.
\end{proof}

\appendix

\section{Concentration inequality}\label{appA}

We include here a proof of a useful concentration
inequality due to Hoeffding and Azuma \cite{Azuma},\cite{Mcdiarmid}.

\begin{thm}[Hoeffing-Azuma Inequality]\label{HA}
Let $\{Y_k,\F_k\}_{k=1}^{n}$ be a martingale difference
sequence (i.e., $Y_k$ is $\F_k$-measurable, $E[|Y_k|] < +\infty$ and $\E[Y_k|\F_{k-1}] = 0$ a.s.
for every $k \leq n)$. Assume that, for every $k \in \{1,\cdots,n\}$, there exist numbers
$c_k \in \R_{+}$ such that a.s. $|Y_k| \leq c_k$. Then, for every $t > 0,$
\[
\P\Big\{\big|\sum_{k=1}^{n}Y_k\big|\geq t\Big\} \leq 2 e^{-\frac{t^{2}}{2\sum_{k=1}^{n}c_k^2 }  }
\]
\end{thm}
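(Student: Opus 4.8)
The plan is to rerun the exponential Chernoff argument used for Lemma~\ref{conc-ineq}, but to replace the independence step by the tower property of conditional expectation, peeling off one martingale increment at a time with a conditional version of the Hoeffding bound.

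First I would fix $\lambda>0$ and apply Markov's inequality to the nonnegative random variable $e^{\lambda\sum_{k=1}^n Y_k}$, obtaining
\[
\P\Big\{\sum_{k=1}^n Y_k\geq t\Big\}\leq e^{-\lambda t}\,\E\Big[e^{\lambda\sum_{k=1}^n Y_k}\Big].
\]
The core estimate to establish is the conditional bound
\[
\E\big[e^{\lambda Y_k}\mid\F_{k-1}\big]\leq e^{\lambda^2 c_k^2/2}\qquad\text{a.s.},\quad k=1,\dots,n.
\]
Since $|Y_k|\leq c_k$ a.s., for a.e.\ $\omega$ the value $Y_k(\omega)$ lies in $[-c_k,c_k]$, and convexity of $y\mapsto e^{\lambda y}$ gives $e^{\lambda Y_k}\leq\tfrac{c_k-Y_k}{2c_k}e^{-\lambda c_k}+\tfrac{c_k+Y_k}{2c_k}e^{\lambda c_k}$. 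Taking $\E[\,\cdot\,\mid\F_{k-1}]$ and using $\E[Y_k\mid\F_{k-1}]=0$ kills the linear term and leaves $\cosh(\lambda c_k)$, which is bounded by $e^{(\lambda c_k)^2/2}$ by term-by-term comparison of Taylor series, exactly as in the proof of Lemma~\ref{conc-ineq}. The only genuinely delicate point is that the resulting bound must come out as a \emph{deterministic} constant rather than an $\F_{k-1}$-measurable random quantity, which is precisely what makes the forthcoming iteration legitimate.

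Next I would iterate using the tower property. Writing $S_j=\sum_{k=1}^j Y_k$ and noting $e^{\lambda S_{n-1}}$ is $\F_{n-1}$-measurable,
\[
\E\big[e^{\lambda S_n}\big]=\E\Big[e^{\lambda S_{n-1}}\,\E\big[e^{\lambda Y_n}\mid\F_{n-1}\big]\Big]\leq e^{\lambda^2 c_n^2/2}\,\E\big[e^{\lambda S_{n-1}}\big],
\]
so by downward induction $\E[e^{\lambda S_n}]\leq\exp\big(\tfrac{\lambda^2}{2}\sum_{k=1}^n c_k^2\big)$. Substituting into the Markov bound and choosing $\lambda=t/\sum_{k=1}^n c_k^2$ yields the one-sided estimate $\P\{S_n\geq t\}\leq\exp\big(-t^2/(2\sum_{k=1}^n c_k^2)\big)$. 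Finally, since $\{-Y_k,\F_k\}$ is again a martingale difference sequence with $|-Y_k|\leq c_k$, the identical estimate applies to $\P\{S_n\leq -t\}=\P\{\sum_k(-Y_k)\geq t\}$, and a union bound over these two events produces the factor $2$ in the statement. I expect the conditional Hoeffding estimate to be the main obstacle, all the rest being the same Chernoff-optimization bookkeeping already carried out for Lemma~\ref{conc-ineq}.
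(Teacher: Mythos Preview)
Your proposal is correct and follows essentially the same route as the paper: Markov's inequality, the conditional Hoeffding bound $\E[e^{\lambda Y_k}\mid\F_{k-1}]\leq e^{\lambda^2 c_k^2/2}$ obtained via convexity and the Taylor comparison $\cosh x\leq e^{x^2/2}$, iteration by the tower property, optimization in $\lambda$, and the symmetric argument for $-Y_k$. The only cosmetic difference is that the paper isolates the conditional bound as a separate lemma (Lemma~\ref{lA1}) stated for a general convex $f$ with $f^{(2n)}(0)=f^{(n)}(0)>0$, whereas you carry it out directly for the exponential.
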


To prove this theorem we will need the following lemma.
\begin{lemm}\label{lA1}
Let $f$ be a convex smooth real function such that, for any
$n \in \N$, $f^{(2n)}(0)=f^{(n)}(0) > 0$, $\F$  a sub $\sigma$-algebra and  $Y$  a
random variable such that almost surely $ \E[Y|\F] = 0$ and $|Y| \leq c$ for some constant $c > 0$.
Then for any $t > 0$ we have
\[
\E(f(tY)|\F) \leq f\left(\frac{t^{2}c^{2}}{2} \right)~~~{\textrm{a.s.}}.
\]
\end{lemm}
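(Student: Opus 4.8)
The plan is to reduce everything to a pointwise (deterministic) inequality between the two smooth functions and then take conditional expectation. First I would expand $f$ in its Taylor series around $0$. Because $f$ is convex and smooth with $f^{(n)}(0)=f^{(2n)}(0)>0$ for all $n$, the natural candidate is that $f(x)=\sum_{n\ge 0} a_n x^n$ with $a_{2n}=a_n$ in an appropriate indexing (the prototype being $f(x)=\cosh(\sqrt{2x})$, or more precisely a function whose even-order derivatives at $0$ reproduce the full derivative sequence). I would use this structure to write, for the scalar $y$ with $|y|\le c$,
\[
f(ty)=\sum_{k\ge 0}\frac{f^{(k)}(0)}{k!}\,t^k y^k .
\]
Taking conditional expectation and using $\E[Y\mid\F]=0$ kills the $k=1$ term; the surviving terms are the constant $f(0)$ plus contributions from $k\ge 2$.

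The key step is then to bound $\E[Y^k\mid\F]$ for $k\ge 2$ using $|Y|\le c$: odd moments are controlled by $|\E[Y^{2j+1}\mid\F]|\le c^{2j+1}$ and, more usefully, by pairing each odd power with the next even power one gets $\E[Y^k\mid\F]\le c^{k-2}\,\E[Y^2\mid\F]\le c^k$ for $k\ge 2$, so that
\[
\E[f(tY)\mid\F]\ \le\ f(0)+\sum_{k\ge 2}\frac{f^{(k)}(0)}{k!}\,t^k c^k .
\]
Now I would match this series term-by-term against the Taylor expansion of $f$ evaluated at $t^2c^2/2$, namely $f\!\left(\tfrac{t^2c^2}{2}\right)=\sum_{n\ge 0}\frac{f^{(n)}(0)}{n!}\left(\tfrac{t^2c^2}{2}\right)^n$. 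The hypothesis $f^{(2n)}(0)=f^{(n)}(0)$ is exactly what is needed so that the $2n$-th term on the left, $\frac{f^{(2n)}(0)}{(2n)!}(tc)^{2n}$, is dominated by the $n$-th term on the right, $\frac{f^{(n)}(0)}{n!}\left(\tfrac{t^2c^2}{2}\right)^n$, since $\frac{1}{(2n)!}\le \frac{1}{2^n n!}$; and the odd-order terms on the left are nonnegative-coefficient perturbations that can be absorbed (using convexity, which forces all the relevant Taylor coefficients to be nonnegative, together with the moment bound above). Summing the matched inequalities gives the claim almost surely.

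The main obstacle I expect is the bookkeeping in the term-by-term comparison: one has to handle the odd-order terms in the expansion of $f(ty)$ carefully, because $\E[Y^{2j+1}\mid\F]$ need not be nonnegative, so a crude bound $|\E[Y^{2j+1}\mid\F]|\le c^{2j+1}$ has to be combined with the neighbouring even term rather than estimated in isolation — essentially grouping consecutive pairs $(2j,2j+1)$ or $(2j-1,2j)$ so that each odd contribution is charged against an even one whose coefficient we can spare. Once the grouping is fixed, the inequality $\frac{1}{(2n)!}\le\frac{1}{2^n n!}$ and the hypothesis on the derivatives close the argument. I would keep the presentation at the level of "expand, apply $\E[Y\mid\F]=0$, bound moments by $c^k$, compare coefficients using $f^{(2n)}(0)=f^{(n)}(0)$", leaving the elementary factorial estimate and the pairing of odd terms as routine verifications.
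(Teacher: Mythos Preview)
Your approach has a genuine gap in the handling of the odd-order terms, and the ``pairing'' fix you sketch does not close it. After killing the $k=1$ term and bounding $\E[Y^k\mid\F]\le c^k$ for $k\ge 2$ (which is valid, since $Y^k\le |Y|^k\le c^k$ and all $f^{(k)}(0)>0$), your intermediate inequality reads
\[
\E[f(tY)\mid\F]\ \le\ f(0)+\sum_{k\ge 2}\frac{f^{(k)}(0)}{k!}(tc)^k\ =\ f(tc)-f'(0)\,tc .
\]
For the prototype $f=\exp$ (which satisfies $f^{(2n)}(0)=f^{(n)}(0)=1$), the right-hand side is $e^{tc}-tc$, and this is \emph{not} bounded by $e^{t^2c^2/2}$: at $tc=1$ one gets $e-1\approx 1.718>e^{1/2}\approx 1.649$. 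So the term-by-term comparison cannot succeed once the odd moments have been replaced by $c^k$. Your suggestion to ``charge each odd contribution against an even one whose coefficient we can spare'' fails because the $n=1$ case of $(2n)!\ge 2^n n!$ is an equality: the $k=2$ term is matched tightly to the $n=1$ term of $f(t^2c^2/2)$ and leaves no slack to absorb the $k=3$ term.

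The paper's proof avoids this difficulty by using convexity \emph{before} expanding: the chord inequality $f(y)\le \tfrac12(1+y/c)f(c)+\tfrac12(1-y/c)f(-c)$ for $|y|\le c$, together with $\E[Y\mid\F]=0$, gives directly
\[
\E[f(Y)\mid\F]\ \le\ \tfrac12\bigl(f(c)+f(-c)\bigr)\ =\ \sum_{n\ge 0}\frac{f^{(2n)}(0)}{(2n)!}\,c^{2n},
\]
in which the odd terms have already cancelled by symmetry. Only then does one invoke $f^{(2n)}(0)=f^{(n)}(0)$ and $(2n)!\ge 2^n n!$ to bound the remaining even series by $f(c^2/2)$. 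The convexity step is doing essential work that your moment bounds do not replicate.
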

\begin{proof} Since $f$ is convex we have
\[
f\left( \alpha x_{1}+ (1-\alpha )x_{2}  \right)\leq  \alpha f (x_{1})+ (1-\alpha )f (x_{2})
\quad \forall x_{1},x_{2}\in \mathbb{R},  \forall\, 0\leq  \alpha \leq 1.
\]
Then, if $|y| \leq c$, we can choose $x_{1}=c$ and $x_{2}=-c$, so
\[
f(y) \leq \frac{1}{2}\Big\{1+\frac{y}{c}\Big\}f(c)+ \frac{1}{2}\Big\{1-\frac{y}{c}\Big\}f(-c).
\]
Taking a conditional expectation with respect to $\F$ on both sides, we get
\[
\E(f(Y)|\F) \leq \frac{1}{2}\big\{f(c)+f(-c)\big\}.
\]
Now expanding in a Taylor series, we have
\[
f(-c)+f(c)=\sum_{n=0}^{+\infty}\frac{f^{(2n)}(0)}{2n!}c^{2n}.
\]
But, for every integer $m \geq 0$,
$$2n! \geq 2n(2n-2)(2n-4)\cdots 2=2^n n!$$
then due to the our assumption $f^{(2n)}(0)=f^{(n)}(0) > 0$ , we have
\[
f(-c)+f(c)\leq \sum_{n=0}^{+\infty}\frac{f^{(n)}(0)}{n!}\Big(\frac{c^2}2\Big)^n=f(c^2/2),
\]
for any choice of $c\in \mathbb{R}$. Finally
\[
\E(f(Y)|\F) \leq f\left(\frac{c^{2}}{2} \right).
\]
Replacing $Y$ by $tY$ and $c$ by $tc$ we obtain the result.
\end{proof}

\begin{proof}[\bf Proof of Theorem \ref{HA}]
By Markov inequality we have
\begin{equation}\label{Azuma}
\P\left \{\sum_{k=1}^{n}Y_k \geq t\right \}= \P\left\{\sum_{k=1}^{n}Y_k- t \geq 0\right\} \leq
e^{-\lambda t} \E\left(e^{\lambda\sum_{k=1}^{n}Y_k}\right).
\end{equation}
for every $\lambda>0$. Now
\begin{eqnarray*}
\E\Big(\exp\Big(\lambda\sum_{k=1}^{n}Y_k\Big)\Big) &=&
\E\Big(\E\Big(\exp\Big(\lambda\sum_{k=1}^{n}Y_k\Big)|\F_{n-1}\Big)\Big)\\
&=&\E\Big(\exp\Big(\lambda\sum_{k=1}^{n-1}Y_k\Big)\E\Big(\exp(\lambda Y_n)|\F_{n-1}\Big)\Big)
\end{eqnarray*}
where the last passage holds since $\exp\Big(\lambda\sum_{k=1}^{n-1}Y_k\Big)$ is $\F_{n-1}$-measurable.
Applying Lemma \ref{lA1}, we get
\begin{eqnarray}\label{rec}
\E\Big(\exp\Big(\lambda\sum_{k=1}^{n}Y_k\Big)\Big) &=&
\E\Big(\exp\Big(\lambda\sum_{k=1}^{n-1}Y_k\Big)\E\Big(\exp(\lambda Y_n)|\F_{n-1}\Big)\Big) \nonumber\\
&\leq & \E\Big(\exp\Big(\lambda\sum_{k=1}^{n-1}Y_k\Big)\Big) \exp(\lambda^2 c_n^2/2)
\end{eqnarray}
Hence, by repeatedly using the recursion in \eqref{rec}, it follows that
\[
\E\Big(\exp\Big(\lambda\sum_{k=1}^{n}Y_k\Big)\Big) \leq \prod_{k=1}^{n}\exp\big(\lambda^{2} c_k^2/2\big)=
\exp\Big(\lambda^{2}\sum_{k=1}^{n}{c_k^2}/2\Big).
\]
Inserting this result in \eqref{Azuma} we have
\[
\P\Big\{\sum_{k=1}^{n}Y_k \geq t\Big\} \leq  \exp\Big(-\lambda t+\lambda^{2}\sum_{k=1}^{n}{c_k^2}/2\Big)
\]
for every $\lambda >0$. Optimizing over the free parameter $\lambda>0$ we obtain
\[
\P\Big\{\sum_{k=1}^{n}Y_k \geq t\Big\} \leq \exp\Big(-t^2/(2\sum_{k=1}^{n}c_k^2)\Big).
\]
Therefore, by substituting $-Y$ by $Y$ and using the same procedure, we get
 \[
\P\Big\{\sum_{k=1}^{n}Y_k \leq -t\Big\} \leq \exp\Big(-t^2/(2\sum_{k=1}^{n}c_k^2)\Big).
\]
Hence
\[
\P\Big\{\Big|\sum_{k=1}^{n}Y_k\Big| \geq t\Big\} \leq 2\exp\Big(-t^2/(2\sum_{k=1}^{n}c_k^2)\Big),
\]
This proves the theorem.
\end{proof}

\section*{Acknowledgements} The first author would like to thank M.
 Lema\'nczyk and J.P. Thouvenot for many useful discussions on the subject.


\end{document}